\documentclass[a4paper,12pt]{article}
\usepackage[left=3cm,right=3cm,top=3.5cm,bottom=3.2cm]{geometry}

\usepackage[T1]{fontenc}
\usepackage{frcursive}
\usepackage[latin1]{inputenc}
\usepackage{manfnt} 
\usepackage{amssymb,amsmath,amsthm, amscd}
\usepackage{color}
  \usepackage[colorlinks=true,linkcolor=black,citecolor=black,urlcolor=black]{hyperref}
 \usepackage[usenames,dvipsnames]{xcolor}
 \usepackage{bbm} 
 \usepackage{float}
\usepackage{comment}
\newsavebox{\strikeoutI}
\usepackage{latexsym}
\usepackage{graphicx}
\usepackage{enumerate}
\usepackage{color}
\newtheorem{Theorem}{Theorem}[part]

\newtheorem{Proposition}{Proposition}[part]

\newtheorem{Lemma}{Lemma}[part]

\newtheorem{Remark}{Remark}[part]

\newcommand{\bsde}{backward stochastic differential equation\,\,}
\newcommand{\fbsde}{forward-backward stochastic differential equation\,\,}


\newcommand {\eeqn}{\end {eqnarray}}
\newcommand {\beq}{\begin {eqnarray*}}
\newcommand {\eeq}{\end {eqnarray*}}
\newcommand{\norm}[1]{\left\Vert#1\right\Vert}
\newcommand{\abs}[1]{\left\vert#1\right\vert}

\newcommand{\R}{\mathbb R}

\newcommand{\N}{\mathbb{N}}

 \newcommand{\tiun}{\ensuremath{ {t_{i+1}}}}
 \newcommand{\ti}{\ensuremath{ {t_{i}}}}

\def \QUAD{\mathbb \qquad \qquad \qquad}

\def\calF{\mathcal{F}}

\def\R{\mathbb R} \def\N{\mathbb N}     \def\E{\mathbb E}

\def\L{\mathbb L}

\def\E{\mathbb{E}}

\def\R{\mathbb{R}}

 \usepackage{amssymb}

\def \Sup{\displaystyle\sup}

\def \Max{\displaystyle\max}

\setcounter{equation}{0}
\setcounter{section}{0}

\begin{document}
\title{
 \textbf{A Regress-Later Algorithm  for Backward   Stochastic Differential Equations}  
 }
 
\vspace{3mm} 
\author{
\begin{tabular}{c}
  Kossi  Gnameho \footnote{\tiny{Maastricht University , Dept. of Quantitative Economics,   Email: \sf   k.gnameho@maastrichtuniversity.nl }}    
\quad
Mitja  Stadje \footnote{\tiny{University of Ulm,  Faculty of Mathematics and Economics, Email:  \sf  mitja.stadje@uni-ulm.de} }  
   \quad     Antoon   Pelsser    \footnote{\tiny{Maastricht University,  Dept. of Quantitative Economics,  {Email}: \sf   a.pelsser@maastrichtuniversity.nl } } \\
  \vspace{1cm}
 \end{tabular}
 }
    \date{}
 
\maketitle

\begin{abstract} 
\noindent This work deals  with the numerical approximation of   backward stochastic differential equations (BSDEs).
 We propose   a new algorithm  which is based on the regression-later approach and   the least squares Monte Carlo method. 
 We give some conditions under which our numerical algorithm convergences and solve  two practical    experiments to illustrate its   performance. \\

\textbf{Keywords:} BSDE,  SDE, PDE, Regression, Monte Carlo, Pricing. 
\end{abstract}

\vspace{5mm}

\newpage
\tableofcontents
\newpage


\section{Introduction}
 
This work deals with the numerical approximation  of  backward stochastic differential equations (BSDEs) on a certain time interval $[0,T]$.   
Backward stochastic differential equations  were first introduced by Bismut \cite{bismut1973} in the linear case  and later developed  by  Pardoux and Peng \cite{Pardoux1992Peng}. 
  In the past decade,  BSDEs have attracted a lot of attention and have been intensively studied in mathematical finance, insurance and  stochastic optimal control theory. 
     For example   in   a complete financial market, the price of a standard European option can be seen as the solution of a linear BSDE. Moreover, the price of an American option can be formulated as the solution  of a reflected BSDE. 

\vspace{0.5cm}     
\noindent     BSDEs have also been widely applied for portfolio optimization,  indifference pricing,  modelling of convex risk measures  and the modelling of ambiguity with respect to the stochastic drift and the volatility.  
  See for instance  El Karoui et al. 
 \cite{ElKaroui1997BSDEinFinance}, Cheridito et al.  
 \cite{cheridito2007second}, Barles et al.  \cite{barles1997sde}, Duffie et al.  \cite{duffie1992stochastic}, Hamad\`ene et al. \cite{hamadene2007starting}, Hu et al. \cite{hu2005utility},  Leaven and Stadje \cite{laeven2014robust} and the references therein.
 In general, many of these equations do not have   an explicit or  {closed form solution}.    Due to its importance, some efforts have  been made to provide   numerical solutions. A  four-step scheme  has been proposed for instance by  Ma  et al. in \cite{jinma1994solving} to solve  forward-backward stochastic differential equations (FBSDEs). 
 In \cite{bally1997approximation},  Bally  has proposed  a random time discretization scheme.  
   Discrete time approximation  schemes have also been  proposed by  Bouchard and Touzi  in \cite{bouchard:touzi2004MC-BSDE} and   Chevance   \cite{chevance1997resolution} for instance. 
 In  Chevance's work \cite{chevance1997resolution}, strong regularity assumptions of the coefficients of the BSDE are requiered for the  convergence results.  In Crisan et al. \cite{crisan2012solving}, was proposed a cubature techniques   for BSDEs with application to nonlinear pricing.
  Gobet et al. \cite{gobet2005reg-MC-BSDE} presented a discrete  algorithm    based on the Monte Carlo  method to solve   BSDEs.
 Recently,  Fourier methods to solve FBSDEs were proposed by  Huijskens et al.  \cite{huijskens2016efficient} and a convolution method by Hyndman et al. \cite{hyndman2014global}.  Briand et al. \cite{briand2014simulation}  proposed an  algorithm to solve BSDEs based on Wiener chaos and Picard's iterations
expansion.    Gobet et al. \cite{gobet2016approximation}    designed a numerical scheme for solving BSDEs with   using Malliavin weights and least-squares regression.  Reducing variance in  numerical solution of BSDEs is proposed by   Alanko et al.  \cite{alanko2013reducing}.  
Other recent references can  be found  in  Chassagneux et al.   \cite{chassagneux2016numerical},  Khedher et al. \cite{khedher2016discretisation},  Bender et al.  \cite{bender2013posteriori},  Zhao et al. \cite{zhao2006new},  Ventura et al. \cite{ventura2016discretely},  Gong et al.  \cite{gong2015one},  among others. 
 
 \vspace{0.5cm}     
\noindent    We propose in this paper  a new algorithm  which is based on a regression-later approach. 
 Glasserman and Ye \cite{glasserman2004simulation}  show that the regression-later approach offers advantages  in comparison to  the classical regression  technique.   
An asymptotic convergence rate of the regression-later technique  is derived under  some  mild auxiliary   assumption  in  Beutner et  al.  \cite{beutner2013fast} for  single-period problems.
 Stentohoff \cite{stentoft2004convergence} discusses  the  convergence of the regression-now (cf. \cite{glasserman2004simulation}) technique in the case of the evaluation of  an American option.

\vspace{0.5cm}     
\noindent Under some regularity assumptions, the solution of a   FBSDE can be represented by the solution
 of a regular semi-linear parabolic partial differential equation (PDE). By
exploiting the Markov property of the solution of the FBSDE, we have developed a probabilistic
numerical regression called  \emph{regression-later algorithm}  based on the least squares
Monte Carlo method and the previous connection between the  quasi-linear parabolic partial differential equation  and the FBSDE. 
 To the  best of our knowledge, the regression-later approach has not been already used  in the BSDE   literature.   
For  most numerical algorithms to solve BSDEs, we need to compute  in general two conditional expectations      at each step   across the time interval. 
  This computation can be very costly especially in high dimensional problems.
For most numerical algorithms, it is important to note that  the process $Z$ is more difficult to compute   than the   process $Y$.
The proposed  algorithm requires only one conditional expectation to compute  at each step across the time interval.    The   algorithm   yields  good convergence results   in practice and the computation of $Z$ is simple. 
  
  \vspace{0.5cm}     
\noindent
This paper is structured  as follows. In the first part of our work, we introduce  the basic   theory of BSDEs, give  some general results on the studies of  FBSDEs  and review the classical backward Euler-Maruyama scheme. 
 
 In the next step, we   describe in detail the  regression-later algorithm  and  derive a  convergence result of the scheme.  
  Finally, we   provide two numerical experiments to illustrate the performance of the regression-later  algorithm: the first in the context of option pricing   and  the second discusses the case  where  the forward  process is a  Wiener process.

 \subsection*{Notations and Assumptions }
We will use  in this chapter  the notations of   El Karoui et al. \cite{el2008backward}.  We  consider  a filtered probability space $(\Omega,\mathcal{F},\mathbb{P},\mathbb{F})$   with  $\mathcal{F}= \mathcal{F}_{T}$,  $\mathbb{F}=(\mathcal{F}_{t})_{0\leq t\leq T}$ a complete natural filtration of a $d$-dimensional Brownian motion $W$   and $T$    a fixed finite horizon. For all $m \in \N^*$  and $x  \in \R^m $,  $|x|$  denotes the  Euclidean norm of $x$. 
   For the matrix $A \in  \R^{m\times d}$,  we define  its   Frobenius    norm by   $|A| : = \sqrt{\text{Trace}(AA^*) }$.  The matrix $A $ can be considered as an element of the space  $ \R ^{m\times d}$.  
\begin{itemize}

\item  $\L^2_m(\mathcal{F}_t):= \bigg \{(X_t)_{t \in [0,T]} \in \R^m, \, \mathcal{F}_t-\text{measurable and} $    
  $$\norm{X}_{\L^2}=\E[\abs{X_t}^2]^{1/2}<\infty \bigg\}.$$ 
 \item  $\mathcal{S}^2(\R^m):= \bigg\{(Y_t)_{t \in [0,T]} \in \R^m, \, \text{continuous and adapted such that} $ 
 $${ \displaystyle  \norm{Y}_{\mathcal{S}^2}^2=\E [ \sup_{t \in [0,T]} \abs{Y_t}^2] <\infty \bigg \}}  .$$
 \item  $\mathcal{H}^2(\R^m):= \bigg \{(Z_t)_{t \in [0,T]} \in \R^m ,\,\text{continuous and adapted such that}   $ 
 $$  { {\norm{Z}_{\mathcal{H}^2}^2=\E[(\int_0^T \abs{Z_s}^2ds) ]<\infty \bigg \}}}.$$ 
\item  All the equalities and the inequalities between random variables are understood in almost sure sense unless explicitly stated otherwise. 
 \item  $\mathcal{C}^{l,k}_b ([0,T]\times\R^m) $   is the set of real valued functions which are  $l$ times continuously differentiable  in their first coordinate and   $k$ times   in their second  coordinate with bounded partial derivatives up to order $k$.
 \item   $\mathcal{C}^k (\Omega) $   is the set of $k$ times continuously differentiable functions  on $\Omega$.
 
   \item For $x\in \R^m$,   $\nabla_x:= (\frac{\partial}{\partial x_1},  \cdots ,\frac{\partial}{\partial x_m})$. The operator  $\nabla_x$ is called the gradient.  In the one-dimensional case,  we will  use   the same  notation.
 \item For $x,y \in \R^m $,  $x.y$ denotes the usual inner product  on  the space  $\R^m$. 
 
\end{itemize} 
 
\section{Definitions and   Estimates}
\setcounter{equation}{0} \setcounter{Assumption}{0}
\setcounter{Theorem}{0} \setcounter{Proposition}{0}
\setcounter{Corollary}{0} \setcounter{Lemma}{0}
\setcounter{Definition}{0} \setcounter{Remark}{0}

 In this section, we introduce the general concept of backward stochastic differential equations  and forward-backward stochastic differential equations with respect to the standard Brownian motion. In the last part of the section, we recall some classical estimates from the theory of  BSDEs.
\subsection{Backward Stochastic Differential Equations}
In  the  filtered probability space $(\Omega,\mathcal{F},\mathbb{P},\mathbb{F})$,  backward stochastic differential equations  are a special class of stochastic differential equations. The main difference is that these equations are specified with  a prescribed terminal value as shown in the following   equation  
%
 \begin{equation}
  \left\{ 
  \label{art1_EDSR00}
     \begin{aligned}
   	    -&dY_t= f(t,Y_t,Z_t) dt - Z_tdW_t ; \qquad 0\leq t< T, \\
   	    &Y_T=\xi.
   	    \end{aligned}
  \right.
 \end{equation}
The preceding system can be written equivalently as the following   
\begin{equation}
 \label{art1_EDSR01}
  \left. 
     \begin{aligned}
   	    Y_t= \xi+ \int_t^T f(s,Y_s,Z_s) ds - \int_t^T Z_sdW_s,\\
   	    \end{aligned}
  \right.
 \end{equation}
where 
\begin{itemize}
\item  $\xi $ is the terminal condition of the equation \eqref{art1_EDSR00} and is assumed   to be an $\mathcal{F}_T$- measurable and a square integrable random variable, 
\item the measurable mapping $(t, y, z)\mapsto f(t, y, z)$ is generally called the generator of \eqref{art1_EDSR00}.    
 \end{itemize}

 \noindent A   solution of the \bsde  \eqref{art1_EDSR00} is   a couple of progressively measurable processes $(Y,Z)$  such that:  
  \begin{eqnarray*}
 \left\{ \begin{aligned}
  &i)  \quad \int_0^T|Z_s|^2ds < \infty \text{ and }  \int_0^T \abs{ f(s,Y_s, Z_s) }ds < \infty,       \\
 &ii)   \quad (Y_t ,Z_t) \quad \text{satisfies the equation}   ~\eqref{art1_EDSR00},     
 \end{aligned}
      \right.
 \end{eqnarray*}
%
%
In general, the equation  \eqref{art1_EDSR00}  does not  admit  a unique solution.
The existence and uniqueness of a solution can be shown under the conditions  given in    Pardoux and Peng \cite{pardoux1990adapted} which involves the Lipschitz continuity of the driver function $g$.  In that case, we have 
$$(Y_t ,Z_t)_{0\leq t\leq T}\in \mathcal{S}^2(\R^m) \times \mathcal{H}^2(\R^{m\times d}).$$
 
\begin{Remark}{}\label{art1_remark1}
  If  the generator function   is identically equal to zero, the \bsde \eqref{art1_EDSR01} is reduced to the following classical stochastic equation
  $$  Y_t=\xi  -  \int_t^TZ_sdW_s.$$  
 This previous simplification  can be associated with the martingale representation theorem  in the filtration generated by the Brownian motion. The  solution $Y$  is a martingale  and  we have the explicit solution
 $$  Y_t=\E(\xi | \mathcal{F}_t).$$  
 \end{Remark}
\noindent BSDEs appear  in numerous problems in finance, in insurance and especially in stochastic control. 
A frequent problem in finance or insurance is  the problem of the valuation of  contract and the  risk management of a portfolio.   Linear and nonlinear BSDEs  appear naturally in these situations. For example   in   a complete financial market, the price of a standard European option can be seen as the solution of a linear BSDE. 
The interested reader can consult  the paper of El Karaoui et  al.
  \cite{ElKaroui1997BSDEinFinance}, Cheridito et al.  
 \cite{cheridito2007second},  Duffie et al.  \cite{duffie1992stochastic}, Hamad\`ene et al. \cite{hamadene2007starting} and the references therein for further details.

\subsection{Forward-Backward Stochastic Differential Equations}
 
We will consider  decoupled forward-backward stochastic differential equations (FBSDEs), which consists of  a system of  two equations given by   
  \begin{eqnarray}
\label{art1_FBSDE}
 \left\{\begin{aligned}
 X^x_t &=x+\int_{0}^t b(s,X^x_s)  ds+ \int_{ 0}^t\sigma(s,X^x_s) dW_s, \quad (t,x) \in [0,T]\times \R^m,\\
 Y^x_t &=\phi(X^x_T) +\int_t^T f(s,X^x_s,Y^x_s,Z^x_s)ds-\int_t^T Z^x_s dW_s.
 \end{aligned} 
      \right.
 \end{eqnarray}
 The first component is a forward process and the second a backward  process.  In general, the system   \eqref{art1_FBSDE}  does not  admit  a unique solution.
The existence and uniqueness of a solution can be shown under the conditions  given in    Pardoux and Peng \cite{pardoux1990adapted} which involves the Lipschitz continuity property of the coefficient of 
the system \eqref{art1_FBSDE}.
  We make the following  regularity assumptions: 
  \begin{eqnarray*}
 {(H)}\left\{ 
 \begin{aligned}
 & (H1):\,\text{the functions}  \, (t,x) \mapsto b(t,x),\sigma(t,x)  \, \text{are   uniformly Lipschitz in $x$ } \\
&\quad \text{  and  satisfy: } \, | b(t,x)|+ | \sigma(t,x)| \leq K( 1+|x|),  \\
 &(H2):  \,\text{there exists a positive constant }\, K >0,\, \text{such that} \, \,     \\
 &   {|f(t_1,x_1,y_1,z_1)-f(t_2,x_2,y_2,z_2)|\leq K(|x_1-x_2|+ |y_1-y_2| + |z_1-z_2|)}  \\
  &\QUAD \text{for any} \quad    (t_i, x_i,y_i,z_i),  i=1,2, \\
  &(H3):\,\text{there exist}\, \, k_\sigma, K_\sigma  > 0\,\, \text{such that for all}\,\,   t \in [0,T ] , \, \,\text{and}\, \,  x,  \zeta  \in \R^m  \\
   &\QUAD  k_\sigma |\zeta|^2 \leq  \sum^{ }_{i,j}[ \sigma \sigma^ *] _{i,j}(t,x)\zeta_i \zeta_j | \leq K_\sigma |\zeta|^2, 
  \end{aligned}
 \right.
 \end{eqnarray*} 
 and 
 \begin{eqnarray*}
 {(G)}\left\{ 
 \begin{aligned}
  & (G1): \,\, \text{there exists a positive constant }\, K >0,\, \text{such that} \,  \\ 
   &\QUAD   \sup  |f(t,0,0,0)|\leq K, \,\,\, \text{for every}  \,\,\, t\in[0,T],  \\ 
 & (G2): \,\, \text{the function}  \, x \mapsto \phi(x)  \text{\, is Lipschitz and  belongs to } \mathcal{C}^1 (\R^m,\R)   \\ 
  &  \qquad \,\,  \text{almost everywhere and we denote  its Lipschitz constant  by $C_\phi$,}       \\ 
 &  {(G3):   \,\,\text{the driver} \, f: [0,T]\times\R^m\times\R\times\R^{d} \stackrel{}{\rightarrow}\, \R \, \,\,  \text{is continuously differentiable  }}  \\
   &  \qquad \,\,  \text{ in $(x,y,z)$ with uniformly bounded   derivatives,}  \\
 &    { (G4): \,\,  \text{the functions} \quad    b \in C^{0,1}_b([0,T]\times\R^m, \R^m) \,\, \text{and}\,\, \sigma \in C^{0,1}_b([0,T]\times\R^m,  \R^{m\times d} )}.\\ 
\end{aligned}
 \right.
 \end{eqnarray*}
\noindent  The  last assumption $(G4) $ means that  the functions $b$ and $\sigma$ are continuous  in their first coordinate and continuously differentiable  in the space variable with uniformly bounded derivatives. 
 The condition  $ \phi(X^x_T) \in  \L^2_m(\mathcal{F}_T)$ and the assumptions  $(H)$ and $(G1)$  ensure the existence and   uniqueness of the solution of the decoupled FBSDE \eqref{art1_FBSDE}.  With these assumptions, we have  
$$
(Y^x_t ,Z^x_t)_{0\leq t\leq T}\in \mathcal{S}^2(\R^m) \times \mathcal{H}^2(\R^{m\times d}).
$$
\noindent As already mentioned in the introduction, for a large class of FBSDEs,  we do not have an explicit solution. 
 We therefore need  approximation  schemes to solve these equations numerically.  
 Most of the existing numerical schemes are based on  the Monte Carlo  method.
Our regression algorithm is based on the   following theorem which  establishes a link between the solution of the  decoupled  FBSDE \eqref{art1_FBSDE} and the solution of the quasi-linear parabolic PDE \eqref{Art1_eq_PDE}. This theorem is  one of the cornerstones  for our  numerical scheme. 
\subsubsection*{Connection between Quasi-linear PDE and Forward-Backward SDE}
\begin{Theorem}(Pardoux and Peng \cite{Pardoux1992Peng}) \\
We assume that, there exist   $C>0$ and $q>0$, such that: 
  \begin{equation} \label{hypo_PDE}
  |u(t,x)| +  |(\nabla_x u)(t,x)| \leq C(1+|x|^q).
  \end{equation}
  The function $u \in C^{1,2}([0,T]\times\R^m)$   solves  the   parabolic partial differential equation  below: 
 \begin{eqnarray}\label{Art1_eq_PDE}
 \left\{ \begin{aligned} 
    & \frac{\partial u}{\partial t}(t,x)+\mathcal{L}u(t,x)+f(t,x,u(t,x), \nabla u \sigma(t,x))=0,    \quad (t,x)\in [0,T)\times\R^m \\
 & u(T,x)=\phi(x), \quad   \quad x\in \R^m.
 \end{aligned}
 \right.
 \end{eqnarray} 
The differential operator $\mathcal{L}$ is defined by  
\begin{equation*} 			        
 \quad \mathcal{L} \psi =:   b \nabla \psi  
 +  \frac{1}{2} Trace( A  \nabla ^2 \psi) ,  \quad \text{for any} \quad \psi \in C^{1,2}([0,T]\times\R^m),
 \end{equation*}
 and   {$A = \sigma\sigma^*$. The matrix $\sigma^*$ denotes the   transpose  matrix of  $\sigma$}.  
Then  the solution of the system \eqref{art1_FBSDE} can be  represented as follows: 
 \begin{equation*}
 \quad\forall  \,\, t\in[0,T],\quad  Y^x_t=u(t,X^x_t) \quad\mbox{\textup{and}}\quad Z^x_t=\sigma(t,X^x_t)^*\nabla_x u(t,X^x_t) .
  \end{equation*} 
%
 \end{Theorem}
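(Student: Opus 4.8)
The plan is to proceed by a verification argument: define the candidate pair
$(\tilde Y_t,\tilde Z_t):=\big(u(t,X^x_t),\,\sigma(t,X^x_t)^*\nabla_x u(t,X^x_t)\big)$, show by It\^o's formula that it solves the backward equation in \eqref{art1_FBSDE} with the required integrability, and then invoke the uniqueness of the solution of the decoupled FBSDE (established in Pardoux and Peng \cite{pardoux1990adapted} under $(H)$, $(G1)$ and $\phi(X^x_T)\in\L^2_m(\mathcal{F}_T)$) to conclude that $(Y^x,Z^x)=(\tilde Y,\tilde Z)$, which is precisely the asserted representation.

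The computational core is to apply It\^o's formula to the process $t\mapsto u(t,X^x_t)$. This is legitimate since $u\in C^{1,2}([0,T]\times\R^m)$ and $X^x$ is an It\^o process with dynamics $dX^x_t=b(t,X^x_t)\,dt+\sigma(t,X^x_t)\,dW_t$. Grouping the finite-variation terms, the drift of $u(t,X^x_t)$ reads $\big(\tfrac{\partial u}{\partial t}+b\,\nabla_x u+\tfrac12\mathrm{Trace}(A\,\nabla_x^2 u)\big)(t,X^x_t)=\big(\tfrac{\partial u}{\partial t}+\mathcal{L}u\big)(t,X^x_t)$ with $A=\sigma\sigma^*$, while the martingale part is $\nabla_x u(t,X^x_t)\,\sigma(t,X^x_t)\,dW_t$, which is $\tilde Z_t\,dW_t$ up to the transpose convention used for $Z$. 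Using the PDE \eqref{Art1_eq_PDE} the drift equals $-f\big(t,X^x_t,u(t,X^x_t),(\nabla_x u\,\sigma)(t,X^x_t)\big)=-f(t,X^x_t,\tilde Y_t,\tilde Z_t)$, and the terminal condition $u(T,\cdot)=\phi$ gives $\tilde Y_T=\phi(X^x_T)$. Integrating from $t$ to $T$ then yields exactly $\tilde Y_t=\phi(X^x_T)+\int_t^T f(s,X^x_s,\tilde Y_s,\tilde Z_s)\,ds-\int_t^T\tilde Z_s\,dW_s$, i.e. $(\tilde Y,\tilde Z)$ solves the backward component of \eqref{art1_FBSDE}.

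It remains to check admissibility, $(\tilde Y,\tilde Z)\in\mathcal{S}^2(\R^m)\times\mathcal{H}^2(\R^{m\times d})$. Continuity and adaptedness of the paths follow from the continuity of $u$, $\nabla_x u$ and $\sigma$ and of the paths of $X^x$. For the integrability I would combine the polynomial growth bound \eqref{hypo_PDE}, $|u(t,x)|+|\nabla_x u(t,x)|\le C(1+|x|^q)$, with the standard moment estimates for the forward SDE: under $(H1)$ the process $X^x$ has finite moments of every order, uniformly in $t\in[0,T]$, so $\E[\sup_{t}|u(t,X^x_t)|^2]<\infty$ and $\E[\int_0^T|\sigma(s,X^x_s)^*\nabla_x u(s,X^x_s)|^2\,ds]<\infty$. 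The computation itself is routine; the two points deserving care are (i) the matrix/transpose bookkeeping, so that the term $\nabla_x u\,\sigma$ appearing in the PDE is correctly identified with the process integrated against $dW$ in the backward equation; and (ii) the fact that \eqref{Art1_eq_PDE} is posed only on $[0,T)\times\R^m$ while the terminal data is attained at $t=T$, so the It\^o expansion is first performed on $[t,T-\varepsilon]$ and one lets $\varepsilon\downarrow 0$, using the continuity of $u$ up to $T$ together with the growth bound \eqref{hypo_PDE} and the moment estimates on $X^x$ to justify the passage to the limit. No genuinely hard step arises, since this is the verification (easy) direction of the nonlinear Feynman--Kac formula, $u$ being given rather than constructed.
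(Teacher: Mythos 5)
Your proposal is correct and follows essentially the same route as the paper's own proof: apply It\^o's formula to $u(t,X^x_t)$, use the PDE \eqref{Art1_eq_PDE} to identify the drift with $-f$, and conclude by uniqueness of the solution of \eqref{art1_FBSDE} together with the growth bound \eqref{hypo_PDE}. Your additional remarks on the square-integrability of the candidate pair and on taking the limit $\varepsilon\downarrow 0$ from $[t,T-\varepsilon]$ merely make explicit details the paper leaves implicit.
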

\begin{proof}  
Let us first consider  a solution $u$ of the   parabolic partial differential equation \eqref{Art1_eq_PDE} and   the couple $(\bar{Y}_t, \bar{Z}_t) $ defined by   
$$\QUAD\bar{Y}_t=u(t,X^x_t),   \quad \bar{Z}_t=\sigma(t,X^x_t)^*\nabla_x u(t,X^x_t),  \quad  \text{for}  \, t \, \in[0,T]. $$   
It\^o's Lemma applied to the function $u$ leads us to   
 $$ u(t,X^x_t) = u(T,X^x_T)- \int_t^T\left(\frac{\partial u}{\partial s}(s,X^x_s)+\mathcal{L}u(s,X^x_s) \right) ds - \int_t^T\bar{Z}_s dW_s. $$ 
 \noindent As the function $ u $ solves the PDE \eqref{Art1_eq_PDE}, we have 
 $$ u(t,X^x_t) = u(T,X^x_t)+\int_t^T f( s,  X^x_s  , u(s,X^x_s), \nabla_x u(t,X^x_t) \sigma(t,X^x_t) )ds - \int_t^T\bar{Z}_s dW_s. $$
As $ u(T, X^x_T)= \phi(X^x_T)$,  one has   
$$ \bar{Y}_t  =\phi(X^x_T) +\int_t^T f(s,X^x_s,\bar{Y}_s,\bar{Z}_s)ds-\int_t^T \bar{Z}_s dW_s.$$
Hence the  couple  $(\bar{Y}_t, \bar{Z}_t)_ {t\in[0,T] }$ is a  solution of the \fbsde  in  \eqref{art1_FBSDE}.  By the assumption  \eqref{hypo_PDE} and  the uniqueness of the solution of the \fbsde \eqref{art1_FBSDE}, the theorem follows. 
  \end{proof}
\begin{Proposition}{ } \label{Acroissement}
 Under the assumptions $ (H1)$ and  $(G4)$, we have the following prior estimates;   there exist  three  positive and continuous functions  $C_1, C_2, C_3 $  such that, for every    $p \geq 2 $ and  $ t,s $ (where $   0\leq s\leq t\leq T$),  
 \begin{eqnarray*}
 \left\{ \begin{aligned} 
  &\text{(i)}  \quad \E ( \Sup_{0\leq s\leq T}\abs{  X^x_s }^p )
 \leq   C_1(T,p)   (1+ |x|^p ) ,   \\  
  &\text{(ii)}  \quad \E ( \Sup_{0\leq t\leq T}\abs{\nabla_x X^x_t }^p )   
 \leq C_2(T,p) , \\
 &\text{(iii)} \quad \E \abs{X^x_t-X^x_s}^{p}
 \leq C_3(T,p)(1+ |x|^{p} )   |t-s|^{p/2}.       
 \end{aligned}
 \right.
 \end{eqnarray*} 
The functions  $ C_{1}, C_{2},C_{3}$  are independent  of  $x,t $ and $s$.  
 \end{Proposition}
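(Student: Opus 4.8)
The plan is to establish the three bounds in the order (i), (ii), (iii), since (iii) uses (i), and all three rest on the same two ingredients: the Burkholder--Davis--Gundy (BDG) inequality applied to the stochastic integrals, and Gronwall's lemma. Throughout, $K$ denotes the Lipschitz/linear-growth constant from $(H1)$, and the constants produced will depend only on $T$, $p$ and $K$ (and on the bounds of $\nabla_x b,\nabla_x\sigma$ in Step (ii)), never on $x,t,s$.

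\emph{Step (i).} Starting from the integral form $X^x_t = x + \int_0^t b(s,X^x_s)\,ds + \int_0^t \sigma(s,X^x_s)\,dW_s$, I would use the convexity bound $\abs{a+b+c}^p \le 3^{p-1}(\abs{a}^p+\abs{b}^p+\abs{c}^p)$. The drift term is controlled by H\"older's inequality, $\abs{\int_0^t b(s,X^x_s)\,ds}^p \le T^{p-1}\int_0^t \abs{b(s,X^x_s)}^p\,ds$, followed by the linear growth $\abs{b(s,x)}\le K(1+\abs{x})$ from $(H1)$. For the martingale term, BDG gives $\E[\sup_{u\le t}\abs{\int_0^u \sigma(r,X^x_r)\,dW_r}^p] \le c_p\,\E[(\int_0^t\abs{\sigma(s,X^x_s)}^2\,ds)^{p/2}]$, and one further H\"older step plus linear growth of $\sigma$ puts this in the same form. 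Introducing the stopping times $\tau_n := \inf\{t : \abs{X^x_t}\ge n\}$ and $g_n(t) := \E[\sup_{u\le t\wedge\tau_n}\abs{X^x_u}^p]$ — the localization is what lets one start from a quantity known to be finite — one obtains $g_n(t) \le C(1+\abs{x}^p) + C\int_0^t g_n(s)\,ds$ with $C$ independent of $n$ and $x$; Gronwall yields $g_n(t)\le C(1+\abs{x}^p)e^{CT}$, and monotone convergence as $n\to\infty$ gives (i) with $C_1(T,p):=Ce^{CT}$.

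\emph{Step (ii).} Under $(G4)$ the coefficients $b,\sigma$ are $C^{0,1}_b$, so the stochastic flow $x\mapsto X^x$ is differentiable and the first-variation process satisfies the linear equation $\nabla_x X^x_t = I_m + \int_0^t \nabla_x b(s,X^x_s)\,\nabla_x X^x_s\,ds + \int_0^t \nabla_x\sigma(s,X^x_s)\,\nabla_x X^x_s\,dW_s$, by the classical theory of differentiable stochastic flows. Because $\nabla_x b$ and $\nabla_x\sigma$ are uniformly bounded by $(G4)$, this linear SDE has coefficients bounded uniformly in $(s,x,\omega)$, so the BDG-plus-Gronwall argument of Step (i) applies verbatim to $h(t):=\E[\sup_{u\le t}\abs{\nabla_x X^x_u}^p]$, with the constant initial datum $I_m$ (of Frobenius norm $\sqrt m$) playing the role that $x$ played above. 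One gets $h(T)\le C_2(T,p)$ with $C_2$ depending only on $T$, $p$ and the bounds on $\nabla_x b,\nabla_x\sigma$; in particular it is independent of $x$.

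\emph{Step (iii).} For $0\le s\le t\le T$ write $X^x_t - X^x_s = \int_s^t b(r,X^x_r)\,dr + \int_s^t\sigma(r,X^x_r)\,dW_r$. The drift part is bounded by H\"older and (i): $\E\abs{\int_s^t b(r,X^x_r)\,dr}^p \le (t-s)^{p-1}\int_s^t\E\abs{b(r,X^x_r)}^p\,dr \le C(1+\abs{x}^p)(t-s)^p \le C\,T^{p/2}(1+\abs{x}^p)(t-s)^{p/2}$, using $t-s\le T$; the martingale part is bounded by BDG, H\"older and (i): $\E\abs{\int_s^t\sigma(r,X^x_r)\,dW_r}^p \le c_p(t-s)^{p/2-1}\int_s^t\E\abs{\sigma(r,X^x_r)}^p\,dr \le c_p\,C(1+\abs{x}^p)(t-s)^{p/2}$. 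Adding these gives (iii) with a suitable continuous $C_3(T,p)$. Continuity of $C_1,C_2,C_3$ in $(T,p)$ is automatic from the exponential-in-$T$ and power-of-$T$ form of the bounds. The only genuinely non-routine point is Step (ii): one must invoke — or, for a self-contained account, establish — the $C^1$-differentiability of the flow and the linear SDE it obeys; once that is in place it is once more just BDG and Gronwall. I expect this, together with the care needed in Step (i) to localize before applying Gronwall so as not to manipulate a possibly infinite quantity, to be the main obstacle.
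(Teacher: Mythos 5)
Your argument is correct: the BDG--H\"older--Gronwall scheme with localization for (i) and (iii), and the first-variation linear SDE with bounded coefficients for (ii), is the standard proof of these moment estimates. The paper itself gives no proof of Proposition \ref{Acroissement} and simply refers the reader to Ikeda and Watanabe, so your write-up supplies exactly the classical argument that the cited reference contains; nothing is missing.
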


\noindent  For more details on the above proposition, we refer you to Ikeda and Watanabe \cite{Ikeda2014stochastic}. 
FBSDEs  and their properties are well documented in the literature. 
Due to their importance, we   need robust approximation  schemes to solve these equations.  
 The Monte Carlo   methods   remain very useful tool to deal with these numerical problems. 
Our work will focus on the numerical solution of the FBSDE \eqref{art1_FBSDE}.  In the sequel, we will for simplicity     work  in  the one dimensional framework. However the result can be extended in high dimensional regimes. 
 We end  up this section  by providing a key Lemma from   Zhang \cite{zhang2004NumSchemeBSDE} which establishes   a  path  regularity result of    the martingale integrand $Z^x$.  This result is known as   the $L_2$-time regularity property  of $Z^x$.     For the reader's convenience,  we recall this result  often used in Section \ref{ConvergenceResults}. 
\begin{Lemma}[Zhang \cite{zhang2004NumSchemeBSDE}]\label{art1_lem_ZhangL2Regularization}
Let $\pi$ be  a partition of the interval $[0,T]$ defined as follows, $   \pi : \, 0=t_0<t_1<\ldots< t_N=T,$  with the  mesh  
$\,\Delta_i:=t_{i+1}-t_i$  and $\abs{\pi} := \Max\{\Delta_i\,; 0\leq i \leq N-1\}$.  
Under the assumption of Theorem 3.1 in    \cite{zhang2004NumSchemeBSDE}, 
 there exists a positive constant $C >0$    independent of $\pi$ such that 
    \begin{equation}  
     \left. 
     \begin{aligned}
     \sum^{N}_{i=1}\E   \int_{t_i-1}^{\ti}
 |  Z^x_s- Z^x_{t_i-1}|^2 +|  Z ^x_s- Z^x_\ti|^2   ds \leq C  (1+|x|^2)|\pi|.  
 \end{aligned} 
     \right.
  \end{equation}
\end{Lemma}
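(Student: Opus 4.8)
This is the $L^2$-time regularity estimate of Zhang \cite{zhang2004NumSchemeBSDE}; we sketch the argument. The plan is to transfer the sought regularity of the control process $Z^x$ to the path regularity of the forward component $X^x$ supplied by Proposition \ref{Acroissement}, by means of the Markovian representation $Z^x_s=\sigma(s,X^x_s)^{*}\nabla_x u(s,X^x_s)$ from Theorem 2.1. Write $v(s,x):=\sigma(s,x)^{*}\nabla_x u(s,x)$, so that $Z^x_s=v(s,X^x_s)$, and for $0\le r\le t\le T$ split
$$
|Z^x_t-Z^x_r|\;\le\;|v(t,X^x_t)-v(t,X^x_r)|+|v(t,X^x_r)-v(r,X^x_r)|.
$$
It then suffices to control the spatial increment of $v$ at frozen time and its time increment at a frozen spatial point, and to sum the resulting bounds over the partition.

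First I would establish the spatial regularity of $v$. Under $(H)$ and $(G)$ one differentiates the FBSDE \eqref{art1_FBSDE} in the initial condition $x$: the triple $(\nabla_x X^x,\nabla_x Y^x,\nabla_x Z^x)$ solves a linear FBSDE whose coefficients are controlled by the bounded derivatives of $b,\sigma$ (assumption $(G4)$) and of $f$ (assumption $(G3)$). Standard a priori estimates for linear (F)BSDEs, combined with Proposition \ref{Acroissement}, then show that $\nabla_x u(s,x)=\nabla_x Y^{s,x}_s$ is of linear growth and Lipschitz in $x$ uniformly in $s\in[0,T]$; since $\sigma\in C^{0,1}_b$, the same holds for $v$. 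Hence $|v(t,X^x_t)-v(t,X^x_r)|\le C|X^x_t-X^x_r|$, and Proposition \ref{Acroissement}(iii) with $p=2$ gives $\E|v(t,X^x_t)-v(t,X^x_r)|^2\le C(1+|x|^2)|t-r|$.

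Next comes the time regularity of $v$ at a fixed spatial point, i.e.\ a bound $\E|v(t,X^x_r)-v(r,X^x_r)|^2\le C(1+|x|^2)\,\omega(|t-r|)$ for a modulus $\omega$ that sums to $O(|\pi|)$ over the partition. When the data are smooth, $u\in C^{1,2}([0,T]\times\R^m)$ with derivatives of polynomial growth, so $\partial_s v$ is of polynomial growth on $[0,T-\delta]$ for each $\delta\in(0,T)$ and, via Proposition \ref{Acroissement}(i), the bound holds with $\omega(h)=h$ away from the terminal time; in a neighbourhood of $T$ one instead estimates $\E|Z^x_s-Z^x_T|^2$ directly from $Z^x_T=\sigma(T,X^x_T)^{*}\nabla\phi(X^x_T)$, the continuity of $\nabla\phi$ (assumption $(G2)$), and the $L^2$-convergence $X^x_s\to X^x_T$. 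For merely Lipschitz $\phi\in C^1$ one runs all of the above for mollified terminal data $\phi^{\varepsilon}\in C^{\infty}$ with $b,\sigma,f$ unchanged, checks that every constant obtained depends only on the Lipschitz constants and growth exponents, hence not on $\varepsilon$, and passes to the limit using stability of FBSDEs, which yields $Z^{x,\varepsilon}\to Z^x$ in $\mathcal{H}^2$ and at each $t_i$ in $\L^2$. Putting the two pieces together, for each $i$ and $s\in[t_{i-1},t_i]$ one obtains $\E|Z^x_s-Z^x_{t_{i-1}}|^2+\E|Z^x_s-Z^x_{t_i}|^2\le C(1+|x|^2)|\pi|$, using $t_i-t_{i-1}\le|\pi|$; integrating over $[t_{i-1},t_i]$ and summing over $i=1,\dots,N$ absorbs the factor $\sum_{i=1}^N(t_i-t_{i-1})=T$ into the constant and gives the assertion.

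The main obstacle is the time regularity of $v$ near the terminal time. The endpoint values $Z^x_{t_{i-1}},Z^x_{t_i}$ are not the $L^2(ds\otimes d\mathbb{P})$-optimal piecewise-constant approximants of $Z^x$ on $[t_{i-1},t_i]$, so one cannot simply invoke a projection inequality; moreover $\partial_s\nabla_x u$ generically blows up as $s\uparrow T$ when $\phi$ fails to be twice differentiable, so the clean rate $\omega(h)=h$ is not available globally. The remedy — the direct terminal-layer estimate together with the mollification argument, and in particular the verification that the constant stays uniform in the mollification parameter — is the delicate point; everything else (the representation, the forward-flow estimates of Proposition \ref{Acroissement}, and the linear-FBSDE control of $\nabla_x u$) is routine.
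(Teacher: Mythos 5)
The paper does not actually prove this lemma; it is recalled verbatim from Zhang \cite{zhang2004NumSchemeBSDE} with a citation to his Theorem~3.1, so there is no in-paper argument to match yours against. Judged on its own merits, your sketch identifies the right ingredients (the representation $Z^x_s=\sigma(s,X^x_s)^*\nabla_x u(s,X^x_s)$, spatial Lipschitz control of $\nabla_x u$ via the variational FBSDE, Proposition~2.1(iii) for the forward increments) but has a genuine gap exactly where you flag the "delicate point", and the route you propose for closing it does not work. Your final assembly asserts the \emph{pointwise} bound $\E|Z^x_s-Z^x_{t_{i-1}}|^2+\E|Z^x_s-Z^x_{t_i}|^2\le C(1+|x|^2)|\pi|$ for every $s\in[t_{i-1},t_i]$, which is strictly stronger than the lemma and is generally false near $T$ for terminal data that are only Lipschitz: there $\nabla^2 u(s,\cdot)$ can blow up like $(T-s)^{-1/2}$, so no modulus $\omega(h)=h$ (nor any uniform-in-$i$ pointwise rate) is available on the last intervals, and the "direct terminal-layer estimate" from continuity of $\nabla\phi$ and $X^x_s\to X^x_T$ in $\L^2$ only yields $o(1)$ without a rate. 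Mollifying $\phi$ does not rescue this, because the constants in the pointwise time-regularity of $v^\varepsilon$ near $T$ necessarily degenerate as $\varepsilon\to 0$ — the uniformity in $\varepsilon$ you say one should "check" is precisely what fails.

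The missing idea in Zhang's actual proof is that the estimate is only an \emph{aggregated} one, obtained from a martingale structure rather than from a pointwise modulus of continuity. One writes $Z^x_t=\Gamma_t\,(\nabla_x X^x_t)^{-1}\sigma(t,X^x_t)$ where $\Gamma_t=\E\bigl[\nabla\phi(X^x_T)\nabla_x X^x_T+\int_t^T\nabla f(\cdots)\,ds\,\big|\,\mathcal{F}_t\bigr]$ is a martingale plus an absolutely continuous part; for the martingale part $M$ one uses $\E|M_t-M_{t_{i-1}}|^2=\E[\langle M\rangle_t-\langle M\rangle_{t_{i-1}}]$ and the telescoping bound $\sum_i\int_{t_{i-1}}^{t_i}\E[\langle M\rangle_t-\langle M\rangle_{t_{i-1}}]\,dt\le|\pi|\,\E\langle M\rangle_T$. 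This gives the summed bound $C(1+|x|^2)|\pi|$ including the terminal layer, even though the individual increments $\E|Z^x_t-Z^x_{t_i}|^2$ need not be $O(|\pi|)$ there. Without this (or an equivalent) mechanism your argument establishes the claim only away from $T$, i.e.\ on $[0,T-\delta]$ with a constant depending on $\delta$, which is not the statement of the lemma.
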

 \newpage
\section{Implicit Backward Euler-Maruyama Scheme}{\label{eulerMaruyama}}
\setcounter{equation}{0} \setcounter{Assumption}{0}
\setcounter{Theorem}{0} \setcounter{Proposition}{0}
\setcounter{Corollary}{0} \setcounter{Lemma}{0}
\setcounter{Definition}{0} \setcounter{Remark}{0}
In this section,  we will review the Euler-Maruyama  scheme   of  the   \fbsde  \eqref{art1_FBSDE}.   As already mentioned in the introduction there are several algorithms to solve BSDEs numerically.  
One of the difficulties is to solve  a  dynamic programming problem  which involves the computation of conditional expectations at each step   across the time interval.
  This computation can be very costly especially in high dimensional problems.
For  most numerical algorithms, it is important to note that  the process $Z^x$ is more challenging to compute than the   process $Y^x$ accurately.
Following the work of   Gobet et al. \cite{gobet2005reg-MC-BSDE}, let us consider the  one-dimensional discrete-time approximation of  the equation~(\ref{art1_FBSDE}). We build  a partition  $\pi$ of  the interval $[0,T]$ defined as:   
$$  \pi : \quad 0=t_0<t_1<\ldots< t_N=T,$$ with the  mesh  
$\,\Delta_i:=t_{i+1}-t_i$  and $\abs{\pi} := \Max\{\Delta_i\,; 0\leq i \leq N-1\}$.   
Let $ (X^{\pi}, Y^{\pi},Z^{\pi})$ be an    approximation of the triplet $(X^x,Y^x,Z^x)$ defined as follows. The forward component $X^x$ of the FBSDE  (\ref{art1_FBSDE})  is approximated  by the  classical Euler-Maruyama    scheme which is  given by 
 \begin{eqnarray*}
 \left\{\begin{aligned} 
 X^{\pi}_{0} &= x   \\
 X^{\pi}_\tiun&=X^{\pi}_{t_{i}} +\Delta_i b(t_i,X^{\pi}_\ti)+\sigma(t_i,X^{\pi}_{t_{i}})(W_\tiun- W_\ti), \quad 0 <i <  N.
 \end{aligned}
      \right.
 \end{eqnarray*}
 By integrating the second equation of  the system (\ref{art1_FBSDE}) from the discretization  time  $t_i$ to $t_{i+1}$,  we obtain 
 $$Y^x_\ti  = Y^x_\tiun + \int_\ti ^{\tiun} f(s,X^x_s,Y^x_s,Z^x_s)ds-\int_\ti ^{\tiun} Z^x_s dW_s.$$
An Euler-Maruyama  approximation of the previous stochastic integral is  defined as  
  $$ Y^{\pi}_\ti  = Y^{\pi}_\tiun + f(\ti,X^{\pi}_\ti,Y^{\pi}_\ti,Z^{\pi}_\ti)\Delta_i - Z^{\pi}_\ti\Delta W_\ti; \qquad
  \Delta W_\ti:=W_\tiun- W_\ti. $$
\noindent By multiplying both sides of the preceding equation with   $\Delta W_\ti$   and   taking the conditional expectations  with respect to $\calF_\ti$ of the preceding equality,  Bouchard and Touzi \cite{bouchard:touzi2004MC-BSDE}  derive   the following   backward  scheme  
\begin{eqnarray*}
 \label{euler}
(S.I)\left\{\begin{aligned}
Y^{\pi}_{N} & =  \phi(X^{\pi}_{T}) ,  \\
Z^{\pi}_\ti &= \frac{1}{\Delta_i}\E [Y^{\pi}_\tiun (W_\tiun-  
 W_\ti) \big|\calF_\ti] , \quad 0 \leq  i \leq N-1, \\
 Y^{\pi}_\ti  & =  \E[Y^{\pi}_\tiun\big|\calF_\ti] 
 +\Delta_i
  f(\ti,X^{\pi}_\ti,Y^{\pi}_\ti,Z^{\pi}_\ti)   , \quad 0 \leq i\leq  N-1.\\ 
  \end{aligned}
      \right.
 \end{eqnarray*} 
The implicit scheme $(S.I)$ is the standard backward Euler-Maruyama  scheme for the backward component of the system \eqref{art1_FBSDE}. Bouchard and Touzi \cite{bouchard:touzi2004MC-BSDE} 
simulate the  conditional expectations  using Malliavin calculus techniques.  
In the spirit of  the Longstaff-Schwartz algorithm for American option pricing,  Gobet et al. \cite{gobet2005reg-MC-BSDE}  have used regression techniques to approach  the solution of the scheme $(S.I)$. Their approach is based on the regression-now technique.  The numerical scheme  $(S.I)$ 
is widely documented  in the literature. The control of the simulation error  has been analyzed in several papers.  
 As with many other existing algorithms,  the implementation of the previous scheme is not explicit. 
 The work of Gobet et al. \cite{gobet2005reg-MC-BSDE} provides an implementation of the numerical scheme $(S.I)$  and derives an analytic convergence rate. 
\begin{Theorem} (Gobet et al. \cite{gobet2005reg-MC-BSDE}).  
\label{Simulation Error}
Under the assumptions of Theorem $1$ in   \cite{gobet2005reg-MC-BSDE},  
there exists a constant $C>0$  such that for  $|\pi|$ small enough, 
\begin{eqnarray*}
\left. \begin{aligned} 
\max_{0\leq i < N }\E |Y^x_{\ti}-Y^{\pi}_\ti |^2 + &  \E  \sum^{N-1}_{i=0} \int_\ti^{\tiun}|Z^x_t- Z^{\pi}_\ti|^2dt \leq C(1+|x|^2)|\pi| \\
    &\QUAD \QUAD+ C\E \abs{\phi(X^x_T)-\phi(X^{\pi}_{t_N})}^2.\qquad \\
\end{aligned}
      \right.
\end{eqnarray*}
\end{Theorem}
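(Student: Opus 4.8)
The plan is to follow the classical route of Zhang \cite{zhang2004NumSchemeBSDE} and Bouchard--Touzi \cite{bouchard:touzi2004MC-BSDE}: compare the discrete scheme $(S.I)$ with the continuous system \eqref{art1_FBSDE} through a suitable $L_2$-projection of the martingale integrand $Z^x$, derive a one-step error recursion, and close it with a discrete Gronwall inequality; the $L_2$-time regularity of $Z^x$ (Lemma \ref{art1_lem_ZhangL2Regularization}) and the moment and path estimates of the Euler forward scheme (Proposition \ref{Acroissement}) will supply the $C(1+|x|^2)|\pi|$ contribution. First I would introduce, on each subinterval $[\ti,\tiun]$, the $\mathcal{F}_{\ti}$-measurable variable
\[
\bar Z_{\ti}:=\frac{1}{\Delta_i}\,\E\Big[\int_{\ti}^{\tiun}Z^x_s\,ds\;\Big|\;\mathcal{F}_{\ti}\Big],
\]
the best $\mathcal{F}_{\ti}$-measurable $L_2$-approximation of $Z^x$ on that interval, so that $\E\int_{\ti}^{\tiun}|Z^x_s-\bar Z_{\ti}|^2 ds\le\E\int_{\ti}^{\tiun}|Z^x_s-Z^x_{\ti}|^2 ds$ and $\sum_i\E\int_{\ti}^{\tiun}|Z^x_s-\bar Z_{\ti}|^2 ds\le C(1+|x|^2)|\pi|$ by Lemma \ref{art1_lem_ZhangL2Regularization}; the triangle inequality $|Z^x_t-Z^{\pi}_{\ti}|^2\le 2|Z^x_t-\bar Z_{\ti}|^2+2|\bar Z_{\ti}-Z^{\pi}_{\ti}|^2$ then reduces the $Z$-part of the theorem to a bound on $\sum_i\Delta_i\,\E|\bar Z_{\ti}-Z^{\pi}_{\ti}|^2$.

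Next, multiplying the integrated backward equation on $[\ti,\tiun]$ by $\Delta W_{\ti}:=W_{\tiun}-W_{\ti}$, taking $\E[\,\cdot\mid\mathcal{F}_{\ti}]$ and using the It\^o isometry gives
\[
\Delta_i\,\bar Z_{\ti}=\E[Y^x_{\tiun}\Delta W_{\ti}\mid\mathcal{F}_{\ti}]+\E\Big[\Delta W_{\ti}\!\int_{\ti}^{\tiun}f(s,X^x_s,Y^x_s,Z^x_s)\,ds\;\Big|\;\mathcal{F}_{\ti}\Big],
\]
which, compared with $\Delta_i\,Z^{\pi}_{\ti}=\E[Y^{\pi}_{\tiun}\Delta W_{\ti}\mid\mathcal{F}_{\ti}]$, expresses $\Delta_i(\bar Z_{\ti}-Z^{\pi}_{\ti})$ as $\E[(Y^x_{\tiun}-Y^{\pi}_{\tiun})\Delta W_{\ti}\mid\mathcal{F}_{\ti}]$ plus a drift remainder that conditional Cauchy--Schwarz together with $(H1)$, $(G1)$ bounds by $O(\Delta_i^{3/2})$ in $L_2$. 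Decomposing $Y^x_{\tiun}-Y^{\pi}_{\tiun}=m_{\ti}+\int_{\ti}^{\tiun}\Delta H_s\,dW_s$ with $m_{\ti}:=\E[Y^x_{\tiun}-Y^{\pi}_{\tiun}\mid\mathcal{F}_{\ti}]$, the conditional mean of $\Delta W_{\ti}$ vanishing yields $\Delta_i(\bar Z_{\ti}-Z^{\pi}_{\ti})=\E[\int_{\ti}^{\tiun}\Delta H_s\,ds\mid\mathcal{F}_{\ti}]$ plus an $L_2$-remainder of order $\Delta_i^{3/2}$, whence by conditional Cauchy--Schwarz and the It\^o isometry
\[
\Delta_i\,\E|\bar Z_{\ti}-Z^{\pi}_{\ti}|^2\le \E|Y^x_{\tiun}-Y^{\pi}_{\tiun}|^2-\E|m_{\ti}|^2+C\Delta_i^2 .
\]

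I would then derive the one-step recursion for the $Y$-error. Subtracting the third line of $(S.I)$ from $Y^x_{\ti}=\E[Y^x_{\tiun}\mid\mathcal{F}_{\ti}]+\E[\int_{\ti}^{\tiun}f(s,X^x_s,Y^x_s,Z^x_s)\,ds\mid\mathcal{F}_{\ti}]$ gives
\[
Y^x_{\ti}-Y^{\pi}_{\ti}=m_{\ti}+\E\Big[\int_{\ti}^{\tiun}\big(f(s,X^x_s,Y^x_s,Z^x_s)-f(\ti,X^{\pi}_{\ti},Y^{\pi}_{\ti},Z^{\pi}_{\ti})\big)\,ds\;\Big|\;\mathcal{F}_{\ti}\Big].
\]
Squaring, applying Young's inequality with a free parameter $\beta>0$, the Lipschitz bound $(H2)$ on $f$, Proposition \ref{Acroissement} for the $|X^x_s-X^x_{\ti}|$ and $|X^x_{\ti}-X^{\pi}_{\ti}|$ contributions, and Lemma \ref{art1_lem_ZhangL2Regularization} for the $|Z^x_s-\bar Z_{\ti}|$ contribution, one obtains
\[
\E|Y^x_{\ti}-Y^{\pi}_{\ti}|^2\le(1+\beta\Delta_i)\,\E|m_{\ti}|^2+\frac{C}{\beta}\,\Delta_i\,\E|\bar Z_{\ti}-Z^{\pi}_{\ti}|^2+C\Delta_i^2\,\E|Y^x_{\ti}-Y^{\pi}_{\ti}|^2+\Delta_i\,\varepsilon_i,
\]
with $\sum_i\varepsilon_i\le C(1+|x|^2)|\pi|$; the implicit term is absorbed on the left for $|\pi|$ small. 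Inserting the bound of the previous step for $\Delta_i\,\E|\bar Z_{\ti}-Z^{\pi}_{\ti}|^2$ and using $\E|m_{\ti}|^2\le\E|Y^x_{\tiun}-Y^{\pi}_{\tiun}|^2$, one fixes $\beta$ large enough that the coefficient of $\E|m_{\ti}|^2$ stays $\le 1+C\Delta_i$ while $C/\beta<1$, reducing everything to $u_i\le(1+C\Delta_i)\,u_{i+1}+\Delta_i\varepsilon_i'$ for $u_i:=\E|Y^x_{\ti}-Y^{\pi}_{\ti}|^2$, with terminal datum $u_N=\E|\phi(X^x_T)-\phi(X^{\pi}_{t_N})|^2$ from $(S.I)$, which is retained as is. The discrete Gronwall lemma then gives $\max_i u_i\le C(1+|x|^2)|\pi|+C\,\E|\phi(X^x_T)-\phi(X^{\pi}_{t_N})|^2$, and feeding this back into $\sum_i\Delta_i\,\E|\bar Z_{\ti}-Z^{\pi}_{\ti}|^2$ closes the $Z$-estimate, hence the theorem.

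The main obstacle will be making the constants of the two coupled estimates compatible: the $Z$-error at step $i$ enters the $Y$-recursion at step $i$, while it is itself controlled only by the $Y$-error at step $i+1$, and the conditional Cauchy--Schwarz bound on $\E[(Y^x_{\tiun}-Y^{\pi}_{\tiun})\Delta W_{\ti}\mid\mathcal{F}_{\ti}]$ carries a potentially fatal $\Delta_i^{-1}$ blow-up. The resolution is the orthogonal (martingale-representation) decomposition used above: that conditional expectation equals $\E[\int_{\ti}^{\tiun}\Delta H_s\,ds\mid\mathcal{F}_{\ti}]$, so $\Delta_i\,\E|\bar Z_{\ti}-Z^{\pi}_{\ti}|^2$ is dominated by $\E\int_{\ti}^{\tiun}|\Delta H_s|^2 ds=\E|Y^x_{\tiun}-Y^{\pi}_{\tiun}|^2-\E|m_{\ti}|^2$ (plus an $O(\Delta_i^2)$ remainder), which is exactly the slack left by the $(1+\beta\Delta_i)\E|m_{\ti}|^2$ term in the $Y$-recursion once $\beta$ is chosen appropriately. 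The rest --- the Euler forward-scheme error, the Lipschitz estimates for $f$ and $\phi$, and the discrete Gronwall lemma --- is routine given Proposition \ref{Acroissement} and Lemma \ref{art1_lem_ZhangL2Regularization}.
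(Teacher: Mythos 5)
Your proof is correct in outline, but note first that the paper itself does not prove this statement: Theorem \ref{Simulation Error} is quoted verbatim from Gobet, Lemor and Warin \cite{gobet2005reg-MC-BSDE} with no proof supplied, so the only in-paper argument to compare against is the proof of the analogous Theorem \ref{simulation_errorTHM} for the scheme $(S.II)$. Your route is the classical Zhang/Bouchard--Touzi one: you project $Z^x$ onto $\mathcal{F}_{\ti}$-measurable constants via $\bar Z_{\ti}=\frac{1}{\Delta_i}\E[\int_{\ti}^{\tiun}Z^x_s\,ds\mid\mathcal{F}_{\ti}]$, recover $\Delta_i(\bar Z_{\ti}-Z^{\pi}_{\ti})$ from the $\Delta W_{\ti}$-weighted conditional expectation, and use the orthogonal decomposition $\E|Y^x_{\tiun}-Y^{\pi}_{\tiun}|^2-\E|m_{\ti}|^2$ to absorb the $Z$-error into the slack of the $Y$-recursion; the paper's Theorem \ref{simulation_errorTHM} instead applies the martingale representation theorem to the \emph{discrete} $Y^{\pi}$ on each $[\ti,\tiun]$ to manufacture a c\`adl\`ag integrand $\bar Z^{\pi}$ and manipulates the resulting error identity with Young's inequality and It\^o isometry. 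Both hinge on the same two inputs (Lemma \ref{art1_lem_ZhangL2Regularization} and the discrete Gronwall Lemma \ref{art1_Gronwall1}), and yours is the standard, correct argument for $(S.I)$. Three small points you gloss over: (i) the bound $\Delta_i\E|\bar Z_{\ti}-Z^{\pi}_{\ti}|^2\le\E|Y^x_{\tiun}-Y^{\pi}_{\tiun}|^2-\E|m_{\ti}|^2+C\Delta_i^2$ only holds with a factor $(1+\delta)$ in front of the first two terms, coming from Young's inequality on the cross term with the $O(\Delta_i^{3/2})$ drift remainder --- harmless, but it must be tracked when you tune $\beta$; (ii) that remainder is $O(\Delta_i^{3/2})$ uniformly in $i$ only because $\sup_t\E|Z^x_t|^2\le C(1+|x|^2)$, which follows from $Z^x_t=\sigma^*\nabla_x u(t,X^x_t)$ under $(G)$ and should be said; (iii) the final summation $\sum_i\bigl(\E|Y^x_{\tiun}-Y^{\pi}_{\tiun}|^2-\E|m_{\ti}|^2\bigr)$ is not telescoping as written --- you need the $Y$-recursion again to convert $\E|m_{\ti}|^2$ into $\E|Y^x_{\ti}-Y^{\pi}_{\ti}|^2$ up to $O(\Delta_i)\max_j\E|Y^x_{t_j}-Y^{\pi}_{t_j}|^2$ before the sum collapses. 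None of these is a gap in the idea, only in the bookkeeping.
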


\section{Regression-Later Algorithm}
\setcounter{equation}{0} \setcounter{Assumption}{0}
\setcounter{Theorem}{0} \setcounter{Proposition}{0}
\setcounter{Corollary}{0} \setcounter{Lemma}{0}
\setcounter{Definition}{0} \setcounter{Remark}{0}
In order to describe our  regression-later algorithm, we  introduce the pseudo-explicit scheme $(S.II)$ below which governs our  regression-later algorithm.
The regression-later algorithm  is devoted to solve numerically the \fbsde \eqref{art1_FBSDE}. 
 This technique has already been  used   by Glasserman and Yu \cite{glasserman2004simulation} to compute the price of an American option. 
 These authors have shown that the regression-later approach offers advantages  in comparison to  the  regression-now technique. 
  Beutner et  al. \cite{beutner2013fast} have   provide an asymptotic convergence rate of the regression-later technique under some  mild auxiliary   assumption   for   single-period problems. 
 \subsection{Alternative Algorithm}\label{subsec_pseudo_sc
 heme}
 For the sake of clarity, we consider the  one-dimensional discrete time approximation of  the system \eqref{art1_FBSDE}  where, the   partition is given  by $\pi$. 
In the new scheme  $(S.II)$  below,   we denote conventionally  by  $ (X^{\pi}, Y^{\pi},Z^{\pi})$ an approximation of   the triplet $(X^x,Y^x,Z^x)$ via our scheme. 
  It is important to note that the family $ \{(Y^{\pi},Z^{\pi})  \}$ defined below is different from the one defined   in Section  \ref{eulerMaruyama}.
 Only the forward component $X^x$ of the system (\ref{art1_FBSDE})  is approximated  by the   same  Euler-Maruyama  discretization scheme   described in  the previous Section \ref{eulerMaruyama}. The other components are obtained as follows: due to the Markov property of our Euler-Maruyama  scheme, there exist two measurable deterministic functions $u_\ti^\pi$  and $v_\ti^\pi$ such that  
 for every $\ti \in \pi$, one has  $Y^{\pi}_\ti=u_\ti^\pi(X^{\pi}_\ti)$ and  $Z^{\pi}_\ti=v_\ti^\pi(X^{\pi}_\ti)$ almost surely. 
We build   the following  scheme 
\begin{eqnarray*}
\label{eulerGradiant}
(S.II)\left\{\begin{aligned}
Y^{\pi}_{N} &=  \phi(X^{\pi}_{T}), \quad  Z^{\pi}_{N}  = \sigma(T,X^{\pi}_{T})(\nabla_x\phi) (X^{\pi}_{T}) ,\\
 Y^{\pi}_\ti  & =  \E[Y^{\pi}_\tiun\big|\calF_\ti] 
 +\Delta_i
 \E[f(\tiun,X^{\pi}_\tiun,Y^{\pi}_\tiun,Z^{\pi}_\tiun)\big|\calF_\ti] ,    \quad 0 \leq i\leq  N-1, \\
  Z^{\pi}_{\ti} &=\sigma(\ti,X^{\pi}_\ti)^*\nabla_x Y^{\pi}_\ti,    \quad 0 \leq i\leq  N-1. 
 \end{aligned}
      \right.
 \end{eqnarray*}
The couple of discrete processes $ (Y^{\pi}  , Z^{\pi })  $ is obviously adapted to our filtration by definition.    
Regarding the regression-later algorithm, it is also crucial to   control   the error of the numerical estimation of the couple $ (Y^{x}, Z^{x}) $. The error analysis  of  the Euler approximation for the forward  process $X^x$ is well documented and  understood. 
\subsection{\label{algo1}Description of the   Algorithm}
 We notice that   the   solution  of the system \eqref{art1_FBSDE}  has its value in an  infinite dimensional space. 
 In order to compute the conditional expectations in our  algorithm, for each time instance $  i \in \{0, ...,  N \}$,   we define  a    family $ (e^i_j)_{1\leq j \leq k}  $  of truncated orthogonal basis functions of the  space  $  \L^2_1 (\mathcal{F}_\ti) $  where   $(j,k) \in \N^*\times\N^*$. The integer $k$ denotes the number of basis functions. Our  algorithm  admits five major steps of calculations. 
We  define an orthogonal projection onto  the linear subspace   generated by the family  $ (e^i_j)_{1\leq j \leq k}$.
   Each  basis function   is assumed to be at least  differentiable and continuous in the space variable. Orthogonal polynomials are often used in this context. 
 In our numerical implementation,  we will consider a sequence of Hermite polynomials or a sequence of  Laguerre polynomials. We start with  the same partition $\pi$ of  the time interval $[0,T]$ as in the  previous section. We denote by $(Y^{\pi,k},Z^{\pi,k})$ the numerical approximation of the solution    on the discretization grids of the partition  $\pi$. We also assume that we have at our disposal the Euler-Maruyama  approximation of the forward process $X$ on the same discretization grids. 
  {Moreover, the   family of functions $ (e^i_j)_{1\leq j \leq k}$     is selected such that the conditional expectation can be computed exactly}. In other words, during the  regression-later algorithm below,   the conditional expectation term  $\E[e^i(X^{\pi}_\tiun)\big| \mathcal{F}_\ti]$, is assumed to be known  explicitly via the selected basis functions.
 \\
\hrule
 \vspace{4mm}
 \textbf{Description}
 \vspace{2mm}
 \hrule 
 \vspace{2mm}  
\begin{itemize}
\item  \textbf{Initialisation} :  
Approximate the  terminal condition $Y^{\pi,k}_T=Y^{\pi}_T= \phi(X^{\pi}_{T})$.    
\item For $i=(N-1) $ to $0$,   
\begin{itemize}
\item{Compute the vector $\alpha^{i+1}_k\in \R^{k}$ by   projection of $Y^{\pi}_\tiun$}  in \eqref{eulerGradiant} 
    \begin{equation*}
     \left\{ 
     \begin{aligned}
       \text{Find}  
       \quad  & \alpha^{i+1}_k \in \R^{k}
       \quad \text{such that}, \\
       & J (\alpha^{i+1}_k) = \inf_{ \displaystyle \alpha \in \R^{k}} \E \bigg[ \big| \alpha.e^i(X^{\pi}_\tiun)- Y^{\pi,k}_\tiun \big|^2 \bigg ],\\ 
   \end{aligned} 
     \right.
  \end{equation*}
  with   $ e^i= \left( {\begin{array}{*{20}c}
  e^i_1  \\
   . \\
    . \\
  e^i_k   \\
 \end{array} }  \right).$
\item {Compute $Z^{\pi,k}_\tiun$ by  the following formal derivation},
 \begin{equation*}
 Z^{\pi,k}_\tiun= \alpha^{i+1}_k \nabla _x e^i (X^{\pi }_\tiun)  \sigma(t_{i+1},X^{\pi}_\tiun) . 
 \end{equation*}
 \item {Compute the vector $ \beta^{i+1}_k\in \R^{k}$ }  by the following optimization problem,
 \begin{equation*}
  \begin{cases}
       \text{find}  
        \quad  & \beta^{i+1}_k\in \R^{k}
        \quad \text{such that}, \\
     	   & J (\beta^{i+1}_k) = \inf_{ \displaystyle \beta \in \R^{k}} \E \bigg[ \big| \beta.e^i(X^{\pi}_\tiun)- f(t_{i+1},X^{\pi,k}_\tiun,Y^{\pi,k}_\tiun,Z^{\pi,k}_\tiun) \big|^2 \bigg ].\\ 
    \end{cases}
\end{equation*}
\item Evaluate  $$ Y^{\pi,k}_\ti =(\alpha^{i+1}_k+\beta^{i+1}_k \Delta_i). \E[e^i(X^{\pi}_\tiun)\big| \mathcal{F}_\ti]. $$
\end{itemize}
\item \textbf{End of the algorithm}
 \vspace{2mm}
\hrule 
 \vspace{2mm}  
 \end{itemize}

  \vspace{3mm}
\noindent The regression-later scheme presents several advantages. The   primary   advantage is that, at  each time step of the algorithm, the  scheme requires only one conditional expectation computation. The second  advantage is that
  the basis functions $ (e^i_j)_{1\leq j \leq k}$  in the algorithm  are selected such that the conditional expectation can be computed exactly.  Therefore,  the term $\E[e^i(X^{\pi}_\tiun)\big| \mathcal{F}_\ti]$   is known explicitly.    
  These facts could decrease  significantly the time of computation and  accelerate the convergence of the  algorithm especially in  high dimensional  frameworks where  the curse of dimensionality problem occurs. 
As in  Glasserman and Yu \cite{glasserman2004simulation},  the regression-later approach offers many advantages 
and our  numerical implementations   yield  good convergence results in practice.  
\subsection{Convergence}\label{ConvergenceResults}
 By definition,  the couple of discrete processes $ (Y^{\pi}  , Z^{\pi})  $ is   well defined and  adapted to our filtration. Due to the Markov property of the scheme $(S.II)$,  there exist two measurable deterministic functions $u_\ti^\pi$  and $v_\ti^\pi$ such that  
 for every $\ti \in \pi$, one has  $Y^{\pi}_\ti=u_\ti^\pi(X^{\pi}_\ti)$ and  $Z^{\pi}_\ti=v_\ti^\pi(X^{\pi}_\ti)$ almost surely.
Since we are never sure of the accuracy of
a proposed model, it is in general recommended to know how robust the model is.  Regarding the regression-later algorithm, it is  important to  control  the error due to the estimation of the couple $(Y^{\pi},Z^{\pi}) $. 
This control  provides a convergence rate of the regression-later algorithm.  
 By using the scheme $(S.II)$, the following  theorem provides a convergence rate of this error. 
\begin{Theorem}
\label{simulation_errorTHM}
 Under the assumptions  $(H), (G) $ and if  the functions  $ x \mapsto u_\ti^\pi (x) $ are uniformly Lipschitz,   there  exists a positive constant $C$ independent of the partition $\pi$ such that 
\begin{eqnarray}
 \left.  \begin{aligned}
\max_{0\leq i < N }\E |Y^{\pi}_\ti-Y^x_\ti |^2 +  \E  \sum^{N-1}_{i=0} \int_\ti^{t_i+1}|Z^x_s- Z^{\pi}_\ti|^2ds &  \leq    
      {C(1+|x|^2)|\pi|  \quad} \\ 
   \quad  &  +C\E |\phi(X^x_T)-\phi(X^{\pi}_{t_N})|^2. \\
\end{aligned}
      \right.
 \end{eqnarray} 
\end{Theorem}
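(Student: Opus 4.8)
The plan is to run a backward induction over the grid points $t_N,t_{N-1},\dots,t_0$ and obtain a discrete Gronwall inequality for $\E|Y^{\pi}_\ti-Y^x_\ti|^2$, in which the one-step errors are controlled by the a priori bounds of Proposition~\ref{Acroissement} together with the $L_2$-time regularity of $Z^x$ of Lemma~\ref{art1_lem_ZhangL2Regularization}, and the terminal value of the recursion produces the term $C\,\E|\phi(X^x_T)-\phi(X^{\pi}_{t_N})|^2$. Throughout I abbreviate $\Theta^x_s:=(X^x_s,Y^x_s,Z^x_s)$, $\Theta^{\pi}_s:=(X^{\pi}_s,Y^{\pi}_s,Z^{\pi}_s)$, and I use the representation $Y^x_s=u(s,X^x_s)$, $Z^x_s=\sigma(s,X^x_s)^*\nabla_x u(s,X^x_s)$ recalled above, so that integrating the backward equation in \eqref{art1_FBSDE} over $[\ti,\tiun]$ and conditioning on $\calF_\ti$ gives $Y^x_\ti=\E[Y^x_\tiun\mid\calF_\ti]+\int_\ti^{\tiun}\E[f(s,\Theta^x_s)\mid\calF_\ti]\,ds$.

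The first step is the $Y$-recursion. Subtracting the $Y$-line of $(S.II)$ from the identity above, setting $\delta Y_i:=Y^{\pi}_\ti-Y^x_\ti$, $\delta Z_i:=Z^{\pi}_\ti-Z^x_\ti$, $\delta X_i:=X^{\pi}_\ti-X^x_\ti$, and invoking the Lipschitz bound $(H2)$, conditional Jensen, and $|a+b|^2\le(1+\gamma\Delta_i)|a|^2+(1+\tfrac1{\gamma\Delta_i})|b|^2$, one reaches a recursion of the form
$$\E|\delta Y_i|^2\le(1+C\Delta_i)\E|\delta Y_{i+1}|^2+C\Delta_i\big(\E|\delta X_{i+1}|^2+\E|\delta Z_{i+1}|^2\big)+C\big(1+\tfrac1{\Delta_i}\big)\E|\mathcal E_i|^2,$$
where $\mathcal E_i:=\Delta_i\E[f(\tiun,\Theta^x_\tiun)\mid\calF_\ti]-\int_\ti^{\tiun}\E[f(s,\Theta^x_s)\mid\calF_\ti]\,ds$ is the right-endpoint quadrature error of the driver. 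By $(H2)$ and Cauchy--Schwarz, $\tfrac1{\Delta_i}\E|\mathcal E_i|^2$ is bounded, up to a constant, by $\E\int_\ti^{\tiun}\big(|X^x_\tiun-X^x_s|^2+|Y^x_\tiun-Y^x_s|^2+|Z^x_\tiun-Z^x_s|^2\big)\,ds$ plus a term of higher order in $|\pi|$; summing over $i$, the $X^x$- and $Y^x$-parts are $\bigO{(1+|x|^2)|\pi|}$ by Proposition~\ref{Acroissement}(iii) and the analogous $L^2$-time regularity of $Y^x=u(\cdot,X^x)$ (from the backward equation, the polynomial growth \eqref{hypo_PDE} of $\nabla_x u$, and the moment bound of Proposition~\ref{Acroissement}(i)), while the $Z^x$-part is $\bigO{(1+|x|^2)|\pi|}$ \emph{precisely} by Lemma~\ref{art1_lem_ZhangL2Regularization}. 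The forward error $\max_i\E|\delta X_i|^2\le C(1+|x|^2)|\pi|$ is the classical strong error of the Euler--Maruyama scheme under $(H1)$.

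The second step, and the main obstacle, is the $Z$-error. Writing $\int_\ti^{\tiun}|Z^x_s-Z^{\pi}_\ti|^2\,ds\le 2\int_\ti^{\tiun}|Z^x_s-Z^x_\ti|^2\,ds+2\Delta_i|Z^x_\ti-Z^{\pi}_\ti|^2$, the first term sums to $\bigO{(1+|x|^2)|\pi|}$ once more by Lemma~\ref{art1_lem_ZhangL2Regularization}, so everything reduces to bounding $\sum_i\Delta_i\E|Z^x_\ti-Z^{\pi}_\ti|^2$, i.e. — since $Z^x_\ti=\sigma(\ti,X^x_\ti)^*\nabla_x u(\ti,X^x_\ti)$ and $Z^{\pi}_\ti=\sigma(\ti,X^{\pi}_\ti)^*\nabla_x u^\pi_\ti(X^{\pi}_\ti)$ — the gradient error $\E|\nabla_x u(\ti,X^x_\ti)-\nabla_x u^\pi_\ti(X^{\pi}_\ti)|^2$. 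This is genuinely harder than in the analysis of $(S.I)$: the $Y$-update of $(S.II)$ has no stochastic-integral term, so the recursion above produces \emph{no} free $\Delta_i\E|\delta Z_i|^2$ on its left-hand side, and the gradient error must be estimated on its own. Here the differentiability hypotheses $(G)$ are essential: because $f,\phi,b,\sigma$ are $C^1$ with the stated bounds, one differentiates $(S.II)$ in the space variable to get a backward recursion for $\nabla_x u^\pi_\ti$ which is a consistent discretization of the linearized PDE solved by $\nabla_x u$; the assumed uniform Lipschitz property of the $u^\pi_\ti$ keeps the coefficients of that recursion bounded (and yields the crude bound $\E|Z^{\pi}_\ti|^2\le C(1+|x|^2)$ via $|\sigma|\le K(1+|\cdot|)$ and the Euler moment estimates, which is all one needs for the $\Delta_i\E|\delta Z_{i+1}|^2$-terms near $i=N$). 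A second discrete Gronwall argument for this gradient error, with one-step errors again controlled by Proposition~\ref{Acroissement} and the time regularity of $Z^x$, and with terminal value $\E|\nabla\phi(X^x_T)-\nabla\phi(X^{\pi}_{t_N})|^2$, then gives $\sum_i\Delta_i\E|\nabla_x u(\ti,X^x_\ti)-\nabla_x u^\pi_\ti(X^{\pi}_\ti)|^2\le C(1+|x|^2)|\pi|+C\,\E|\phi(X^x_T)-\phi(X^{\pi}_{t_N})|^2$.

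Finally, feeding the $Z$-estimate into the $\delta Y$-recursion and applying the discrete Gronwall lemma (using $\sum_i\Delta_i=T$, so $\prod_i(1+C\Delta_i)\le e^{CT}$) with terminal value $\E|\delta Y_N|^2=\E|\phi(X^x_T)-\phi(X^{\pi}_{t_N})|^2$ yields $\max_{0\le i<N}\E|\delta Y_i|^2\le C(1+|x|^2)|\pi|+C\,\E|\phi(X^x_T)-\phi(X^{\pi}_{t_N})|^2$, and combining this with the $Z$-estimate produces exactly the asserted bound. I expect the delicate point to be the separate control of the gradient/$Z$-error: justifying the differentiated scheme and, above all, closing its Gronwall estimate uniformly in $\pi$, which is exactly where the uniform-Lipschitz assumption on $u^\pi_\ti$ and the full strength of $(G)$ are used.
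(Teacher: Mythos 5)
Your first step (the $\delta Y$ recursion) matches the spirit of the paper's argument, and you have correctly identified the crux: scheme $(S.II)$ has no explicit stochastic integral in its $Y$-update, so the $Z$-error does not appear for free on the left-hand side of the one-step estimate. But your way of resolving this is where the proof breaks. The paper resolves it by applying the martingale representation theorem on each subinterval: there is a square-integrable process $\bar Z^{\pi}$ with
$$Y^{\pi}_{t}=Y^{\pi}_{\tiun}+\int_{t}^{\tiun}f(\tiun,X^{\pi}_{\tiun},Y^{\pi}_{\tiun},Z^{\pi}_{\tiun})\,ds-\int_{t}^{\tiun}\bar Z^{\pi}_{s}\,dW_{s},$$
so that subtracting this from the exact backward equation yields an identity whose left-hand side contains $\int_{\ti}^{\tiun}(Z^{x}_{s}-Z^{\pi}_{\ti})\,dW_{s}$; squaring and using the It\^o isometry then places $\E\int_{\ti}^{\tiun}|Z^{x}_{s}-Z^{\pi}_{\ti}|^{2}ds$ directly on the left of the one-step recursion for $U_i=\E\big(|Y^x_\ti-Y^{\pi}_\ti|^2+\int_\ti^{\tiun}|Z^x_s-Z^{\pi}_\ti|^2ds\big)$. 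The only new term this creates, $\sum_i\E\int_\ti^{\tiun}|\bar Z^{\pi}_s-Z^{\pi}_\ti|^2ds$, is of order $(1+|x|^2)|\pi|$ by Zhang's $L_2$-regularity lemma combined with $\bar Z^{\pi}_\ti=Z^{\pi}_\ti$. No differentiation of the scheme is needed anywhere.

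Your substitute --- differentiating $(S.II)$ in the space variable and running a second Gronwall recursion for $\E|\nabla_x u(\ti,X^x_\ti)-\nabla_x u^{\pi}_\ti(X^{\pi}_\ti)|^2$ --- is not actually carried out (the consistency of the differentiated recursion with the linearized PDE, whose coefficients involve second derivatives of $u$, is asserted rather than proved), and, more importantly, it cannot close as stated: its terminal datum is $\E|\nabla\phi(X^x_T)-\nabla\phi(X^{\pi}_{t_N})|^2$, and under $(G2)$ the function $\phi$ is only Lipschitz and $C^1$ almost everywhere, so $\nabla\phi$ is merely bounded and this quantity is in general $O(1)$ --- it is neither $O(|\pi|)$ nor dominated by $C\,\E|\phi(X^x_T)-\phi(X^{\pi}_{t_N})|^2$. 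Hence the bound you claim for $\sum_i\Delta_i\E|Z^x_\ti-Z^{\pi}_\ti|^2$ does not follow from your second Gronwall argument, and the conclusion of the theorem is not reached by this route.
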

\noindent In the above theorem, we have assumed the function $u_\ti$ is uniformly Lipschitz for every $\ti \in \pi$.  
We  will argue that this assumption is highly plausible when the mesh $\abs{\pi} := \Max\{\Delta_i\,; 0\leq i \leq N-1\}$ is small enough.  
\begin{proof}[Proof of Theorem ~\ref{simulation_errorTHM}]   The  proof will consist of two parts. In the first part, we  will  prove that:  
\begin{eqnarray*}
 \max_{0\leq i < N }\E |Y^{\pi}_\ti-Y^x_\ti |^2   \leq    
  C(1+|x|^2)|\pi|    +    C\E \abs{\phi(X^x_T)-\phi(X^{\pi}_{t_N})}^2  
 \end{eqnarray*} 
  and   in the second step deduce the existence of the   constant $C>0$ such that
\begin{eqnarray*}
  \E  \sum^{N-1}_{i=0} \int_\ti^{t_i+1}|Z^x_s- Z^{\pi}_\ti|^2ds \leq    
  C(1+|x|^2)|\pi|   +    C\E \abs{\phi(X^x_T)-\phi(X^{\pi}_{t_N})}^2 .
 \end{eqnarray*}  
 During the proof, the constant $C$ may take different values from line to line, but it will be independent from the partition $\pi$.  
Let us  first remark that along the time period $[t_{i}, t_{i+1}] $,
\begin{eqnarray}
\label{discretisation}
\label{equation1}
 \begin{aligned} 
 & Y^x_\ti=Y^x_\tiun + \int_\ti^{{t_{i+1}}} f(X^x_s,Y^x_s,Z^x_s)ds - \int_\ti^{{t_{i+1}}} Z^x_s dW_s.  
 \end{aligned}
 \end{eqnarray}
Taking the conditional expectation with respect to $ \mathcal{F}_\ti$ of the preceding equation 
\begin{eqnarray*}
 \begin{aligned} 
 &Y^x_\ti=\E \left( Y^x_\tiun+ \int_\ti^{{t_{i+1}}} f(X^x_s,Y^x_s,Z^x_s)ds\big|\mathcal{F}_\ti\right).  \\
 \end{aligned}
 \end{eqnarray*} 
\noindent  As defined in the scheme $(S.II)$,  one can  compute an approximation of the  process $Y^x$  at the given time $\ti$  	as the following conditional expectation 
\begin{eqnarray*}
 \begin{aligned} 
  &Y^{\pi}_\ti=\E \left( Y^{\pi}_\tiun+\Delta_i
  f(t_{i+1},X^{\pi}_\tiun,Y^{\pi}_\tiun,Z^{\pi}_\tiun)\big|\mathcal{F}_\ti\right) .
 \end{aligned}
 \end{eqnarray*}
 By a backward induction, one can derive from the preceding equality  that $Y^{\pi}_\ti$ belongs to the space $  \L^2_1(\mathcal{F}_\ti)$.  
 Let us consider  
$$ U_i= \E (|Y^x_\ti-Y_\ti^{\pi} |^2 + \int_\ti^\tiun|Z_s^x-Z^{\pi}_\ti|^2ds ), \quad 0\leq i \leq N -1.$$
We also define $ \delta f^\pi_{i,s}= f(s,X_s,Y^x_s,Z^x_s) - f(t_{i+1},X^{\pi}_\tiun,Y^{\pi}_\tiun,Z^{\pi}_\tiun), \,\, s \in [\ti,\tiun].$
\begin{Remark}{\label{art1_remark_MRT}} 
 \noindent \begin{itemize}
\item  $(Y^x_{\ti} -Y^{\pi}_\ti ) $ and $\displaystyle{(\int_{\ti}^{\tiun}(Z^x_s - Z^{\pi}_\ti) dW_s)}$  are uncorrelated.   
 \item  By the martingale representation theorem, there exists an  $(\mathcal{F}_s )_{t_i \leq s \leq \tiun}$- adapted and square  integrable process 
$\displaystyle {(\bar Z^{\pi}_t)_{t_i \leq t \leq \tiun}}$ and $\displaystyle {(Y^{\pi}_t)_{t_i \leq t \leq \tiun}}$ such that for $t \in [ t_i,  t_{i+1}]$,  
\begin{eqnarray} \label{eq_MartingaleRepresentoF_Z}
\begin{aligned}
 Y^{\pi}_{t} = Y^{\pi}_\tiun + \int_{t}^{{t_{i+1}}} f(t_{i+1},X^{\pi}_\tiun,Y^{\pi}_\tiun, Z^{\pi}_\tiun)ds - \int_{t}^{{t_{i+1}}}\bar Z^{\pi}_sdW_s.
 \end{aligned}
 \end{eqnarray}
 \item  The   process  $\displaystyle \bar Z^{\pi}$ is   c\`adl\`ag   and is equal to     $\displaystyle Z^{\pi}$ only on the time instances of the partition $\pi$. 
\end{itemize}
 \end{Remark}

From the equations \eqref{discretisation} and \eqref{eq_MartingaleRepresentoF_Z}, 
\begin{eqnarray} \label{inegalite_explik}
 \begin{aligned} 
  Y_{\ti}^x -Y^{\pi}_\ti  +  \int_{\ti}^{\tiun}(Z^x_s -  Z^{\pi}_\ti)& dW_s   = Y^x_{\tiun}  -Y^{\pi}_\tiun +\int_\ti^\tiun \delta f^\pi_{i,s}ds \\ 
  & \QUAD +\int_\ti^\tiun( \bar Z^{\pi}_s- Z^{\pi}_\ti) dW_s.
 \end{aligned}
 \end{eqnarray}
 %
 From the inequality \ref{art1_Lem_YoungInequality}, we have   for all $a,b,c \in \R$ and $\alpha >0$
  \begin{eqnarray}\label{ineq_YoungInequality_explicit}
 (a+b+c)^2 \leq  (1+\alpha) a^2 + (1+ \frac{2}{\alpha}) b^2 + (1+\alpha) c^2 + 2 ac.   
 \end{eqnarray} 
Using the equation \eqref{inegalite_explik},
\begin{eqnarray*}
 \begin{aligned}
U_i =& \E \Big [Y^x_{\tiun}  -Y^{\pi}_\tiun +\int_\ti^\tiun \delta f^\pi_{i,s}ds +\int_\ti^\tiun( \bar Z^{\pi}_s- Z^{\pi}_\ti) dW_s  \Big ]^2.
 \end{aligned}
  \end{eqnarray*}
By the  It\^o isometry formula and the quadratic inequality \eqref{ineq_YoungInequality_explicit}, we derive from the above remark  that    for every $\epsilon >0$,  
 \begin{eqnarray*}
 \begin{aligned}
U_i &\leq \E \Big\{(1+ \Delta_i/\epsilon)| Y_\tiun^{\pi}-Y^x_\tiun |^2  +(1+ \Delta_i/\epsilon)  \int_\ti^\tiun|\bar Z_s^{\pi}-Z^{\pi}_\ti|^2ds \\
  &    \quad  +(1+  {2}\epsilon/\Delta_i)(\int_\ti^\tiun f(s,X^x_s,Y_s,Z_s) - f(t_{i+1},X^{\pi}_\tiun,Y^{\pi}_\tiun,Z^{\pi}_\tiun)ds)^2 \Big\} \qquad \\
    &    \qquad  \qquad     +2   \E \Big \{ ( Y_\tiun^{\pi}-Y^x_\tiun)  \, (  \int_\ti^\tiun (\bar Z_s^{\pi}-Z^{\pi}_\ti)dW_s)\Big\}. 
 \end{aligned}
  \end{eqnarray*}
We know that $\displaystyle M_t= \int_\ti^t (\bar Z_s^{\pi}-Z^{\pi}_\ti)dW_s     
,  \, t \in [\ti, \tiun]$ defines  a   martingale in the Brownian filtration. 
Plugging the  equation \eqref{inegalite_explik} into the last term of the previous inequality and taking the conditional expectation according to $\calF_{t_i}$ and  noticing that $(\int_t^\tiun \delta f^\pi_{i,s}ds )_{t_i \leq t \leq \tiun}$ is  of finite variation, we have from the  It\^o isometry formula, 
 \begin{eqnarray}
 \label{inegalite1.1}
 \begin{aligned}
U_i &\leq \E \Big\{(1+ \Delta_i/\epsilon) | Y_\tiun^{\pi}-Y^x_\tiun |^2  +  (1+ \Delta_i/\epsilon)  \int_\ti^\tiun|\bar Z_s^{\pi}-Z^{\pi}_\ti|^2ds \\
  &   \quad +(1+2\epsilon/\Delta_i)(\int_\ti^\tiun f(s,X^x_s,Y_s^x,Z^x_s) - f(t_{i+1},X^{\pi}_\tiun,Y^{\pi}_\tiun,Z^{\pi}_\tiun)ds)^2\\
   &    \QUAD \qquad \qquad - 2  \E    \int_\ti^\tiun  (Z^x_s - \bar Z^{\pi}_s)  (\bar Z_s^{\pi}-Z^{\pi}_\ti) ds\Big\}.  
 \end{aligned}
  \end{eqnarray}  
  By noticing that:  
  $(  Z^x_s - \bar Z^{\pi}_s)  (\bar Z_s^{\pi}-Z^{\pi}_\ti) = (  Z^x_s -Z^{\pi}_\ti )  (\bar Z_s^{\pi}-Z^{\pi}_\ti)    -   (\bar Z_s^{\pi}-Z^{\pi}_\ti)^2  $
 and  from  the inequality  $\,    2ab  \leq \frac{1}{\theta } a ^2 +\theta  b^2$ (with  $a,b \in \R, \,\,   \text{for any}\,\,  \theta > 0 $), we have by  the relation      \eqref{inegalite1.1} 
 \begin{eqnarray*}
 \begin{aligned}
U_i &\leq \E \Big\{(1+ \Delta_i/\epsilon) | Y_\tiun^{\pi}-Y^x_\tiun |^2  +  ( 3 + \Delta_i/\epsilon+ \theta) \int_\ti^\tiun|\bar Z_s^{\pi}-Z^{\pi}_\ti|^2ds \\
  &   \quad +   (1+2\epsilon/\Delta_i) (\int_\ti^\tiun f(s,X^x_s,Y^x_s,Z^x_s) - f(t_{i+1},X^{\pi}_\tiun,Y^{\pi}_\tiun,Z^{\pi}_\tiun)ds)^2  \\
   &    \QUAD \qquad \qquad +   \frac{1}{\theta}   \E   \int_\ti^\tiun  |  Z^x_s -Z^{\pi}_\ti| ^2 ds\Big\}.    
 \end{aligned}
  \end{eqnarray*}  
By  the H\"{o}lder inequality, we have 
  \begin{eqnarray*}
  \begin{aligned}
U_i &\leq \E \Big\{(1+ \Delta_i/\epsilon)| Y_\tiun^{\pi}-Y^x_\tiun |^2  +( 3 + \Delta_i/\epsilon+ \theta)  \int_\ti^\tiun|\bar  Z_s^{\pi}-Z^{\pi}_\ti|^2ds \\
  &    \qquad  + ( \Delta_i+2\epsilon )\int_\ti^\tiun | f(s,X^x_s,Y^x_s,Z^x_s) - f(t_{i+1},X^{\pi}_\tiun,Y^{\pi}_\tiun,Z^{\pi}_\tiun)|^2 ds  \\
     &    \QUAD \qquad \qquad +   \frac{1}{\theta} \E   \int_\ti^\tiun  |  Z^x_s -Z^{\pi}_\ti| ^2 ds \Big\}.   
 \end{aligned}
  \end{eqnarray*} 
 By  the Lipschitz  condition of the driver function  $f$ and the     inequality \eqref{ineq_YoungInequality}, 
  \begin{eqnarray}
 \label{inegalite3}
  \begin{aligned}
U_i &\leq \E \Bigg\{(1+ \Delta_i/\epsilon)| Y_\tiun^{\pi}-Y^x_\tiun |^2  +  ( 3 + \Delta_i/\epsilon+ \theta) \int_\ti^\tiun|\bar  Z_s^{\pi}-Z^{\pi}_\ti|^2ds \\
  &   \quad +  2K^2(\Delta_i+2\epsilon ) \bigg ( \int_\ti^\tiun  |Y^x_s-Y^{\pi}_\tiun|^2ds + \int_\ti^\tiun  |Z^x_s-Z^{\pi}_\tiun|^2ds\bigg ) \\ 
    &   \quad \quad \quad  +   \frac{1}{\theta}     \E   \int_\ti^\tiun  |  Z^x_s -Z^{\pi}_\ti| ^2 ds    + 2K^2(\Delta_i+2\epsilon )\int_\ti^\tiun  |X^x_s-X^{\pi}_\tiun|^2ds  \Bigg\}.  
 \end{aligned} 
  \end{eqnarray}  
  It is known from   for instance Lemma $3.2$  in Zhang  \cite{zhang2004NumSchemeBSDE} or Proposition $5$ in Gobet et al. \cite{gobet2007error}  
  and the result $(iii)$ of Proposition \ref{Acroissement}  that,  there exists a constant  $C>0$  such that 
 \begin{eqnarray} \label{Accroisment_X}
  \begin{aligned}
     \E \int_\ti^\tiun|X^x_s - X^{\pi}_\tiun|^2 ds &\leq 2 \int_\ti^\tiun \E |X^x_s - X^x_\tiun|^2 ds + 2\Delta_i \E |X^x_\tiun - X^{\pi}_\tiun|^2 \\
      & \leq   C(1+|x|^2)|\pi|^2.
 \end{aligned} 
  \end{eqnarray} 
Moreover, we have  
\begin{itemize}  
  \item   $\quad {|Z^x_s-Z^{\pi}_\tiun|= |Z^x_s-Z^x_{s+\Delta_i} + Z^x_{s+\Delta_i}-Z^{\pi}_\tiun|}.$  
 \item    $  \displaystyle{\quad\int_\ti^\tiun|Y^x_s - Y^{\pi}_\tiun|^2 ds \leq 2\int_\ti^\tiun|Y^x_s - Y^x_\tiun|^2 ds + 2\Delta_i |Y^x_\tiun - Y^{\pi}_\tiun|^2}$.
 \end{itemize}
  By the preceding decomposition, we obtain  from  the inequality  \eqref{inegalite3}, 
  \begin{eqnarray}
 \label{inegalite4}
  \begin{aligned}
U_i &\leq \E \Bigg\{C^K_{\epsilon,i}\,\, | Y_\tiun^{\pi}-Y^x_\tiun |^2  + 4K^2(\Delta_i+2 \epsilon )\int_\ti^\tiun |Z^x_{s+\Delta_i}-Z^{\pi}_\tiun|^2ds  \\
  &   \qquad + 4K^2(\Delta_i+2\epsilon )\int_\ti^\tiun  |Y^x_s-Y^x_\tiun|^2ds+  ( 3 + \Delta_i/\epsilon+ \theta)  \int_\ti^\tiun|\bar  Z_s^{\pi}-Z^{\pi}_\ti|^2ds \qquad \\ 
     & \quad \quad   +   4K^2(\Delta_i+2\epsilon )\int_\ti^\tiun |Z^x_s-Z^x_{s+\Delta_i}|^2ds    + \frac{1}{\theta} \E   \int_\ti^\tiun  |  Z^x_s -Z^{\pi}_\ti| ^2 ds \Bigg\} \\
    & \QUAD \QUAD \QUAD     +   C_\epsilon(1+|x|^2)|\pi|^2.   
 \end{aligned}
  \end{eqnarray}    
where $ C^K_{\epsilon,i}=   (1+ \Delta_i/\epsilon+4 \Delta_iK^2(\Delta_i+2\epsilon))$ and $C_\epsilon=   4CK^2(\Delta_i+2\epsilon ) $. 
Clearly    
 \begin{eqnarray}
   \label{technic}
  \begin{aligned}
 &\E\int_\ti^\tiun |Z^x_{s+\Delta_i}-Z^{\pi}_\tiun|^2ds    = \E\int_\tiun^{t_{i+2}} |Z^x_{s }-Z^{\pi}_\tiun|^2ds. \\
  \end{aligned}
  \end{eqnarray} 
From the equality \eqref{technic}, the inequality \eqref{inegalite4} becomes 
 \begin{eqnarray}
 \label{inegalite5}
  \begin{aligned}
U_i &\leq \E \Bigg\{C^K_{\epsilon,i} \,\,| Y_\tiun^{\pi}-Y^x_\tiun |^2  + 4K^2(\Delta_i+2\epsilon ) \int_\tiun^{t_{i+2}} |Z^x_{s }-Z^{\pi}_\tiun|^2ds \\
  &   \qquad \qquad   +   C_\epsilon(1+|x|^2)|\pi|^2     +  4K^2(\Delta_i+2\epsilon )\int_\ti^\tiun  |Y^x_s-Y^x_\tiun|^2ds \quad\quad   \\
     &  \qquad    + 4K^2(\Delta_i+\epsilon )\int_\ti^\tiun |Z^x_s-Z^x_{s+\Delta_i}|^2ds   +    ( 3 + \Delta_i/\epsilon+ \theta)    \int_\ti^\tiun|\bar  Z_s^{\pi}-Z^{\pi}_\ti|^2ds   \\ 
        & \QUAD \QUAD        + \frac{1}{\theta}  \int_\ti^\tiun  |  Z^x_s -Z^{\pi}_\ti| ^2 ds \Bigg\}. 
 \end{aligned}
  \end{eqnarray}  
From Lemma 3.2 in  \cite{zhang2004NumSchemeBSDE},  there exists a positive constant $C$ such that 
    \begin{eqnarray} \int_\ti^\tiun  |Y^x_s-Y^x_\tiun|^2ds \leq   C(1+|x|^2)|\pi|^2\label{Accroisment_Y}. \end{eqnarray}  
Inserting the inequality \eqref{Accroisment_Y} into     \eqref{inegalite5} and setting    $   (\epsilon ;\frac{1}{\theta}) = ( \frac{1}{16K^2}; \frac{1}{2}),$  we   derive a constant  $C>0$ such that 
\begin{eqnarray*}
  \begin{aligned}
\tilde{U}_i &\leq  (1+ C\Delta_i  ) \tilde{U}_{i+1} +  C( 1 + \Delta_i) \bigg (   (1+|x|^2)|\pi|^2   + \E\int_\ti^\tiun |Z^x_s-Z^x_{s+\Delta_i}|^2ds\bigg ) \quad     \\
      & \QUAD  \QUAD     \quad +C(1+   \Delta_i) \E \int_\ti^\tiun   |\bar  Z_s^{\pi}-Z^{\pi}_\ti|^2ds,  
 \end{aligned}
 \end{eqnarray*}   
 where   $  \displaystyle \tilde{U}_i = U_i - \frac{1}{2} \E   \int_\ti^\tiun  |  Z^x_s -Z^{\pi}_\ti| ^2 ds  $.  
From Lemma \ref{art1_lem_ZhangL2Regularization}  and   Lemma \ref{art1_Gronwall1},  there exists a constant $	C>0$ such that  for $|\pi|$ small enough,   
 \begin{eqnarray}
 \left.\label{inegality7}
  \begin{aligned}
 \max_{0\leq i \leq N} \tilde{U}_i \leq C\E(\phi(X^x_{T})-\phi(X^{\pi}_{T})^2  
+ C\sum^{N-1}_{i=0} & \E \int_\ti^\tiun\abs{\bar Z_s^{\pi}-Z^{\pi}_\ti}^2ds \\
 &  +C(1+|x|^2)|\pi|.
  \end{aligned}
\right. 
 \end{eqnarray}
The following argument concludes our proof.  Interval-by-interval, 
 given that   $ \bar Z^\pi_\ti = Z^\pi_\ti$ and the result of  Lemma \ref{art1_lem_ZhangL2Regularization}, 
there exists a positive constant $C>0$ independent of $\pi$ such that 
  \begin{equation}
  \label{eq_zhang}\sum^{N-1}_{i=0}\E   \int_\ti^{\tiun}
 |\bar Z^\pi_s- Z^\pi_\ti|^2  ds \leq C(1+|x|^2)|\pi|.   \end{equation}
Inserting the inequality \eqref{eq_zhang}    into  the  inequality \eqref{inegality7}, we obtain
  \begin{equation}
  \label{inegality8.0}
  \max_{0\leq i \leq N}   \tilde U_i \leq C\E(\phi(X^x_T)-\phi ( X^{\pi}_{T}))^2 + C(1+|x|^2)|\pi|.
  \end{equation}
  In particular, one can derive the following  inequality which     completes the first step of the proof of the theorem  
  \begin{equation}
  \label{inegality8}
  \max_{0\leq i \leq N} \E|Y_\ti^{\pi}-Y^x_\ti|^2 \leq C \E(\phi(X^x_T)-\phi(X^{\pi}_{T}))^2 + C(1+|x|^2)|\pi|. 
  \end{equation}
From  the  inequality \eqref{Accroisment_Y}  and  Lemma \ref{art1_lem_ZhangL2Regularization},    the  inequality \eqref{inegalite5} becomes for  $ |\pi|$ small enough   and  choosing    $(\epsilon,\frac{1}{\theta}) = ( \frac{1}{32K^2}, \frac{1}{2})$,
\begin{align*}
 \tilde U_{i-1}+ \frac{1}{4} \E\int_\ti^{\tiun}|Z^x_s- Z^{\pi}_\ti|^2 ds &\leq (1+C\Delta_i )  \tilde U_{i}   +C(1+|x|^2)|\pi|^2 \\
  & \qquad  +  C(1+   \Delta_i  ) \E\int_{t_{i-1}}^{\ti}|\bar  Z_s^{\pi}-Z^{\pi}_{t_{i-1}}|^2ds,  
\end{align*}
 where    $C>0$ and we used  that
$$\E | Y_\ti^{\pi}-Y^x_\ti |^2 =  \tilde U_i -  \frac{1}{2} \E   \int_\ti^\tiun  |  Z^x_s -Z^{\pi}_\ti| ^2 ds.$$ 
Summing both sides of  the previous inequality     over  the variable $i$ from $1$ to $N-1$, and using the inequality \eqref{eq_zhang},  there exists a positive constant $C>0$ independent of 
 $\pi$ such that
 \begin{equation*}
 \label{inegality9}
 \sum^{N-1}_{i=1}  \tilde U_{i-1}+ \frac{1}{4} \E  \sum^{N-1}_{i=1} \int_\ti^{\tiun}|Z^x_s- Z^{\pi}_\ti|^2ds\leq 
 \sum^{N-1}_{i=1}(1+C\Delta_i ) \tilde U_{i} + C(1+|x|^2)|\pi|.
  \end{equation*}
 We deduce from  the previous relation  and the inequality \eqref{inegality8.0}  that there exists a constant $C>0$ independent of  $\pi$ such that
  \begin{eqnarray}
   \label{inegality10}
 \begin{aligned}
 &   \sum^{N-1}_{i=0}  \E\int_\ti^{\tiun}|Z^x_s- Z^{\pi}_\ti|^2ds \leq    
      C(1+|x|^2)|\pi| +C\E \abs{\phi(X^x_T)-\phi(X^{\pi}_{t_N})}^2.
\end{aligned}
\end{eqnarray}
The last  relation  \eqref{inegality10}  and the inequality   \eqref{inegality8}  conclude.
\end{proof}
\noindent  \textbf{Discussion}: Lipschitz Continuity. \\ 
 In Theorem   \ref{simulation_errorTHM}, we have assumed that the function $u_\ti$ is uniformly Lipschitz for any $\ti \in \pi$.   In the following, we will argue   that such condition is  highly plausible.  
We consider the same partition $\pi$ of  the interval $[0,T]$ as described in the algorithm $ (S.II)$. We recall that $Y^{\pi}_\ti$ defines the Euler approximation  of  $Y^{x}_\ti$ (the exact process at the time step $\ti$). As introduced previously,  
\begin{equation*}
 Y^{\pi}_\ti=\E \left( Y^{\pi}_\tiun+\Delta_i
  f(t_{i+1},X^{\pi}_\tiun,Y^{\pi}_\tiun,Z^{\pi}_\tiun)\big| 
  \mathcal{F}_\ti\right) .
 \end{equation*}
By the martingale representation theorem, there exists an $(\mathcal{F}_s )_{t_i \leq s \leq \tiun}$ adapted and square integrable process 
$(\bar Z^{\pi}_s)_{t_i \leq s \leq \tiun}$ such that 
\begin{equation}
\label{EulerContinu}
Y^{\pi}_{t} =Y^{\pi}_\tiun + \int_t^{ \tiun} f(\tiun,X^{\pi}_\tiun,Y^{\pi}_\tiun,Z^{\pi}_\tiun)ds - \int_t^{ \tiun}  \bar Z^{\pi}_s  dW_s, \, t_i \leq t \leq  t_{i+1}.
\end{equation}
The preceding  representation   can be seen as a continuous version of a  BSDE  on the  time interval    $  [\ti, \tiun] $.  Let us  introduce the continuous Euler discretization of the process  of $X^x$ in  the system  \eqref{art1_FBSDE} given by
 \begin{eqnarray}\label{EulerContinuForward}
 \begin{aligned}
 X^{\pi}_s &= X^{\pi}_\ti +  \int_\ti^{s}   b( \ti , X^{\pi}_\ti) du +  \int_\ti^{s}\sigma ( \ti ,X^{\pi}_\ti) dW_u,     \quad  s \in [\ti, \tiun].  
 \end{aligned}
 \end{eqnarray}
 Let us consider   $ X^{\pi,i}, (i=1,2) $  two  solutions of \eqref{EulerContinuForward}  associated with  two  initial conditions $x_i, (i=1,2)$.   We also associate with  $X^{\pi,i}$, its corresponding solutions  $(Y^{\pi,x_i}, \bar Z^{\pi,x_i}),i=1,2 $  of the equation \eqref{EulerContinu}. 
Let us  define the following   terms 
 $$ \Delta Y^{1,2} _t   :=  Y^{\pi,x_1}_t  -  Y^{\pi,x_2}_t  \quad \text{and} \quad \Delta X^{1,2} _t   :=  X^{\pi,1}_t  -  X^{\pi,2}_t .$$    
 As highlighted above,  due to the Markov property of our Euler scheme, there exist   two measurable deterministic functions $u_\ti^\pi$ and $v_\ti^\pi$ such that  
 for every $\ti \in \pi$ one has,  $Y^{\pi}_\ti=u_\ti^\pi(X^{\pi}_\ti)$ and $  Z^{\pi}_{\ti}  =v_\ti^\pi (X^{\pi}_\ti) $  almost surely.  
 For $i=N$, the function   $ x \mapsto u_{T}(x)=\phi(x)  $ is Lipschitz by assumption.  
 We  now suppose that the function   $u_\tiun^\pi$ is Lipschitz in the space variable  with    $ C_{i+1}$ its Lipschitz constant.  We will  show that   $u_\ti^\pi$ is Lipschitz. 
  Applying It\^o's  formula   to the term  $|Y^{\pi,x_1}   -  Y^{\pi,x_2}|^2$ and taking the expectation, we obtain 
%
%
 \begin{eqnarray*} 
 \begin{aligned}
  \E |Y^{\pi,x_1}_t  -  Y^{\pi,x_2}_t |^2   +  \E\int_{t}^\tiun  |\bar  Z^{\pi,x_1}_s-\bar  Z^{\pi,x_2}_s|^2 ds & = \E |Y^{\pi,x_1}_\tiun  -  Y^{\pi,x_2}_\tiun |^2  \\  
  & \,\, +  2 \E \int_{t}^\tiun (Y^{\pi,x_1}_s -  Y^{\pi,x_2}_s) \delta f^{\pi}_i ds.  
 \end{aligned}
 \end{eqnarray*}
where 
 $\delta  f^{\pi}_i =f(\tiun, X^{\pi,x_1}_\tiun,Y^{\pi,x_1}_\tiun,Z^{\pi,x_1}_\tiun)- f(\tiun,X^{\pi,x_2}_\tiun,Y^{\pi,x_2}_\tiun,Z^{\pi,x_2}_\tiun) $.  From the assumption $(H2)$ and the inequality  $ ab \leq  \frac{1}{2\alpha} a^2+\frac{1}{2}\alpha b^2$, $\alpha> 0$ 
 \begin{eqnarray}\label{eq_M3}
\begin{aligned}
  \E |\Delta Y^{1,2} _t |^2   +  \E\int_{t}^\tiun  |\bar  Z^{\pi,x_1}_s-&\bar  Z^{\pi,x_2}_s|^2 ds     \leq    (1+ K  \Delta_i \alpha  )  \E |\Delta Y^{1,2} _\tiun|^2 \\ 
 &  +\frac{2K}{\alpha}  \int_{t}^\tiun  \E|\Delta Y^{1,2} _s|^2 ds +  \alpha K \Delta_i \E |Z^{\pi,x_1}_\tiun  -  Z^{\pi,x_2}_\tiun |^2.
 \end{aligned}
 \end{eqnarray}
 We point out that on the interval  $[0,T]$,   the process  $(\bar Z^\pi_s)_{0 \leq s\leq T}$  defines a  c\`adl\`ag process.  Given  the fact that    $u_\tiun^\pi$ is Lipschitz and 
 $ \bar Z^\pi_\tiun = Z^\pi_\tiun$,   by  Lemma \ref{art1_lem_ZhangL2Regularization} and the quadratic  inequality \eqref{ineq_YoungInequality},   there exist   two  finite and positive constants $c_i^1$ and $c_i^2 > 0$ such that  
\begin{eqnarray*} 
\begin{aligned}
\alpha K \E\int_\ti^{\tiun}|Z^{\pi,x_1}_\tiun  - Z^{\pi,x_2}_\tiun |^2ds \leq &  \alpha K c_i^1(1+|x_1|^2) |\pi|^2 \\ 
 & \qquad        + 3 \alpha K \int_\ti^{\tiun}\E |\bar  Z^{\pi,x_1}_s  - \bar  Z^{\pi,x_2}_s |^2 ds\\
 & \QUAD     + \alpha K c_i^2(1+|x_2|^2) |\pi|^2 . 
 \end{aligned}
 \end{eqnarray*}
Neglecting the terms with $|\pi|^2$, and inserting ( for  $ \alpha =\frac{1}{6K}$ )  the last inequality  into \eqref{eq_M3}, we have    
 \begin{eqnarray*} 
\begin{aligned}
  \E |\Delta Y^{1,2} _t |^2   +  \frac{1}{2}   \E\int_{t}^\tiun  |\bar  Z^{\pi,x_1}_s-\bar  Z^{\pi,x_2}_s|^2 ds  &   \leq    (1+ \frac{1}{6} \Delta_i  )  \E |\Delta Y^{1,2} _\tiun|^2 \\   
& \qquad +12K^2\int_{t}^\tiun  \E|\Delta Y^{1,2} _s|^2 ds.
 \end{aligned}
 \end{eqnarray*}
 In particular,
 \begin{eqnarray}\label{eq_M5}
\begin{aligned}
  \E |\Delta Y^{1,2} _t |^2    &   \leq    (1+ \frac{1}{6} \Delta_i  )  \E |\Delta Y^{1,2} _\tiun|^2   
+12K^2 \int_{t}^\tiun  \E|\Delta Y^{1,2} _s|^2 ds.
 \end{aligned}
 \end{eqnarray}
  During our backward induction proof, we have assumed above that the function $u_\tiun^\pi$ is Lipschitz. From the equation  \eqref{EulerContinuForward}, we have the following classical estimates
 $$ \E |X^{\pi,x_1}_\tiun  -  X^{\pi,x_2}_\tiun |^2 
 \leq   (1+ C\Delta_i) |x_1-x_2|^2.$$ 
    Gronwall's inequality  from Lemma   \ref{art1_Gronwall2}  applied to   the function $ t \mapsto   \E |\Delta Y^{1,2} _t |^2$ with $t \in [\ti,\tiun) $,  we have from \eqref{eq_M5}    
 \begin{eqnarray*}  
\begin{aligned}
 \E |\Delta Y^{1,2} _t |^2\leq &  ( 1+  \frac{1}{6} \Delta_i ) (1+ C\Delta_i )     C^2_{i+1}\exp( 12K^2\Delta_i) |x_1-x_2|^2 .      
 \end{aligned}
 \end{eqnarray*}  
 We recall that our objective is to prove that the function $u_\ti^\pi$ is Lipschitz with a uniform Lipschitz constant in the space variable.
 We have   
  \begin{eqnarray*}  
\begin{aligned}
  |u_\ti^\pi(x_1)-u_\ti^\pi(x_2) |^2\leq & C_i^2|x_1-x_2|^2,     
 \end{aligned}
 \end{eqnarray*}     
where $C_i^2 =  ( 1+  \frac{1}{6} \Delta_i ) (1+ C\Delta_i ) C^2_{i+1}\exp( 12K^2\Delta_i)$. It is then enough to show that, the positive constant $C_i$ is uniformly bounded to conclude the backward induction result. Let us first remark that in the neighborhood of zero, there exists a positive constant $C$ such that $ \exp(\Delta_i) \leq (1+C \Delta_i)$. 
 Hence, for $\Delta_i$ small enough there exists  a positive constant $C$ such that  
 $$C_i^2 \leq    (1+  C \Delta_i )C^2_{i+1}+ C \Delta_i . $$
 By Lemma \ref{art1_Gronwall1}, we have the following uniformly bounded inequality 
$$  \quad   \max_{0\leq i \leq N } C_i^2  \leq   e^{ C T} ( C^2_\phi + CT ), $$
  where $C_\phi$ is the Lipschitz constant of the function $\phi$ in the \fbsde \eqref{art1_FBSDE}. 
Finally,  
 \begin{eqnarray*} 
\begin{aligned}
  |u_\ti^\pi(x_1)-u_\ti^\pi(x_2) |^2\leq &  e^{ C T} ( C^2_\phi + CT )|x_1-x_2|^2.  
 \end{aligned}
 \end{eqnarray*}       
 This completes the induction. 
 %
From the previous inequality,  the function  $ x \mapsto u_\ti^\pi (x) $ is Lipschitz with a uniform Lipschitz constant $  e^{ C T} ( C^2_\phi + CT )$.

\begin{Remark}{\label{art1_remark_MallivinLipschitz}}
 A similar result of the  Lipschitz continuity   can  be obtained with  the semi-group of  $X^\pi$ through  the integration by parts  formula of  Malliavin Calculus (Definition 1.3.1 in Nualart \cite{nualart2006malliavin}). 
\end{Remark}
\newpage
 

\section{Applications}
\setcounter{equation}{0} \setcounter{Assumption}{0}
\setcounter{Theorem}{0} \setcounter{Proposition}{0}
\setcounter{Corollary}{0} \setcounter{Lemma}{0}
\setcounter{Definition}{0} \setcounter{Remark}{0}
In this section, we   provide two numerical experiments to illustrate the performance of the regression-later  algorithm; the first in the context of option pricing  and  the second in the case  where  the terminal condition is a functional of Brownian motion.   
 The first example  is generally connected to the numerical approximation of   a linear or a nonlinear BSDE.    
 
\vspace{0.5cm} 
\noindent
  BSDEs appear  in numerous problems in finance, in insurance and especially in stochastic control.  
 A frequent problem in finance or in insurance is  the problem of the valuation of a contract and the  risk management of a portfolio which becomes  increasingly complex. Linear and nonlinear BSDEs  appear naturally in these situations. The interested reader can consult  the paper of El Karoui  et  al.
  \cite{ElKaroui1997BSDEinFinance}, Delong  \cite{delong2013backward}, 
  Cheridito et al.    \cite{cheridito2007second},  Duffie et al.  \cite{duffie1992stochastic}, Hamad\`ene et al. \cite{hamadene2007starting} and the references therein for further details.   Many problems in finance or in insurance are nonlinear. We will discuss in the first example the linear case and show how fast our algorithm converges. 
In financial markets the most popular contracts of derivative securities are European and American Call and Put  options. 

\vspace{0.5cm} 
\noindent
 In our  first example,  we will   evaluate standard European options.  The algorithm can   also be  applied to compute the price of some non-path dependent insurance contracts. 
In our implementation, we will consider the orthogonal  Laguerre polynomial  family  as basis in order to solve the conditional expectations problems in our algorithm. 
%
  %
 %
\subsection*{Application 1: Pricing}
Our market model is composed of  two financial assets:  $S$ (risky asset) and $S^0$ (risk-less asset). Let 
 \begin{equation*}
  \left\{ \begin{aligned}
  &   S_t \,\text{ is the price of $S$ a the time   $t$ } \\
  &    S_t^0  \,\text{ is the price of $S^0$  a the time  $t$}. 
       \end{aligned}
      \right.
\end{equation*}
\noindent Based on their assumptions, Black and Scholes have modelled the dynamic of the risky  asset $S$  as 
  a geometric Brownian motion.
  We denote by   the constant $r$ the daily interest rate which is  assumed to be  constant. The process  $S^0$ is governed by the following differential equation: $dS_t^0= rS_t^0dt$ with the initial condition  
  $S_0^0=1$. We have explicitly  $S_t^0= e^{rt}$.  
 The process   $S_t$ follows the following linear SDE  with constant coefficients,  
 \begin{equation}
 \left\{ \begin{aligned}
	     &\frac{dS_t}{S_t}  = \mu dt+\sigma dW_t,   \\
       &S_0 = x,\quad 		        
       \end{aligned} 
        \label{ED1} 
      \right.
\end{equation}
where 
 $\mu\in \R $   is a constant drift coefficient which represents  the expected rate of return of $S$,  
  $S_0 $ is the initial value of the risky asset $S$ and   $\sigma$  is a constant  positive volatility coefficient. By It\^o's Lemma, one can show   that the explicit solution of (\ref{ED1})  is given by    
$ S_t= x e^{(\mu - \frac{1}{2}\sigma^2 t)+ \sigma W_t}.$
Let us consider a European Call option on the risky asset $S$ with  characteristics $(K,T)$,  where  $T$ is the maturity date  and $K$  is the strike value of the contract. The  seller of the Call option is committed to pay to the holder  the sum    $(S_T - K)^+$  which represents the profit that allows to  exercise   the option. We build the following portfolio: 
   at the  time instance $t$, we invest a $\Delta_t$ part of the risky asset and  a $\beta_t$ part of the non-risky asset. Denoting $Y$  the wealth process, we have  at  time   $t$ 
		   \begin{equation*}
		         Y_t= \Delta_tS_t+\beta_t S_t^0.
		   \end{equation*}
A main assumption is that our strategy is self-financing and in a context of  continuously trading for the agent,  a  mathematical translation   is  given by  
 \begin{equation*}
	 dY_t= \Delta_tdS_t+\beta_t dS_t^0.
 \end{equation*}
 Denoting $\theta = \frac{\mu-r}{\sigma}$ and  $Z_t= \sigma \Delta_t S_t$,  
  the triplet $(S_t, Y_t,Z_t)$ solves the following system  
\begin{eqnarray*}
 (E_1)\left\{ \begin{aligned}
-dY_t  &= f(t,S_t,Y_t,Z_t)dt - Z_t dW_t, \qquad Y_T  = \phi(S_T),  \\
    dS_t &=\mu S _t dt  + \sigma S _t d W_t, \qquad\qquad \qquad S _0= x_0, 
  \end{aligned}
      \right.
 \end{eqnarray*}  
where $\phi(x)=(x-K)^+$ and $f(t,x,y,z) = -(ry+\theta z).$
One can point out that in the Black \& Scholes  pricing framework, the value of the replication portfolio follows a linear BSDE. 
The value at time $t$ of the stochastic process $(Y_t)_{0\leq t \leq T}$ corresponds to the value of the portfolio and $ Z_t $ is related to  the hedging strategy. Our example shows that  in a complete market the value of the replicating portfolio and the hedging portfolio are associated with the  solution of a linear BSDE.\\ 
We can evaluate explicitly the value of the wealth process $Y$ for a fixed time. In particular, at the time instance  $t=0$,  $Y_0=  \E  \left( e^{-rT}\phi(S_T) \exp{(-\theta W_T + \frac{1}{2}\theta^2 T)} \right).$
By evaluating the preceding expectation, we obtain the classical  Black-Scholes formula  
\begin{align*}
Y_0 = e^{-r T }(F_{T} \mathcal{N}(d_+) - K \mathcal{N}(d_-)) \quad \text{and} \quad Z_0 =\sigma\mathcal{N}(d_+) S_0 , 
 \end{align*}
where,  $\displaystyle F_{T}=S_0e^{rT}, d_{\pm}= \frac{\log(F_{T}/K) \pm \frac{1}{2}\sigma^2T}{\sigma \sqrt{T}}$ and $\mathcal{N}(x)=\frac{1}{\sqrt{2\pi}} \int_{-\infty }^x e^{\frac{-t^2}{2}}dt.$
The function  $\mathcal{N}$  denotes the cumulative distribution function of the standard normal distribution.   
The objective is to provide a numerical solution of the system   $(E_1) $. 
We will be interested  in  the initial value of the couple $(Y ,Z)$.  We   suppose that we have at our disposal the value of the forward process $S$ on the grids of the partition $\pi$.

\vspace{0.5cm}
\noindent In our numerical  simulation,  we have considered a finite dimensional system of   normalized orthogonal Laguerre  polynomials. We have fixed the number of the chosen basis functions to be constant at each step of the algorithm and evaluate the couple $(Y,Z)$  along the time period $[0,T]$. \\
 Let us consider the  unidimensional discrete-time approximation of  the equation $(E_1)$. We build  the partition  $\pi$ of  the interval $[0,T]$ defined as follows:  
$$\pi: 0= t_0 < ... < t_N= T,$$
$\,\Delta_i:=t_{i+1}-t_i$  and $\abs{\pi} := \Max\{\Delta_i\,; 0\leq i \leq N-1\}$.
We set the  following parameters
 \begin{itemize}
\item  $k$  is the  number of   basis functions,
 \item $M$ is the number of simulated paths of the Brownian motion,
 \item $N$ is the number of the discretization points on $\pi$.
 \end{itemize}
 As input values, we  define  the following parameters 
 \begin{center}
\rule{\linewidth}{.5pt}
$ T=1, \quad r=0.01, \quad S_0=100, \quad K=100,\quad   \mu=1\%, \quad \sigma=2\%$
 \rule{\linewidth}{.5pt} 
 \end{center}

\noindent In the case of the European Call option, the exact value of the solution  $Y,$  at the time point  $t_0$ is   $Y_0=1.3886$ (value of the European  Call option contract) and  the exact   value  for $Z$ at the time point  $t_0$  is $ Z_0=1.39$. 
 The following figure shows the log-representation of the  relative error curve  induced by  the numerical  estimation of the couple $( {Y}_0, {Z}_0)$.    
  Modulo the choice on $|\pi|$ and number of   basis functions $k$,    
 the error decreases significantly as we increase the number of simulations $M$.    
Unfortunately   in   both cases below, the estimator  of the  couple $(Y,Z)$ could be subjected  to some bias in some particular cases of  the variation of the number of the selected   basis functions.   The  error  curves on  the estimation of   $Z_0$  seem to be more volatile. This fact can be justified by the gradient operator in  the regression-later algorithm.   Another effect   is   the accumulation of the projection error associated with the orthogonal projection operator.   
 
\vspace{0.5cm} 
\noindent
In the case of a European Put  option, the same argument as above leads  to a similar conclusion regarding the graphic analysis  of the computation of the price.  In this case, the exact value of the corresponding forward-backward SDE    at the time point  $t_0$ is   $(Y_0,Z_0)=(0.39,-0.60 )$. The exact value of  the European  Put   contract is  $Y_0=0.39 $.   In the case of a European Put  option, we obtain the same convergence order. 
\begin{figure}[H] 
   \begin{center}
  \includegraphics[scale=0.42]{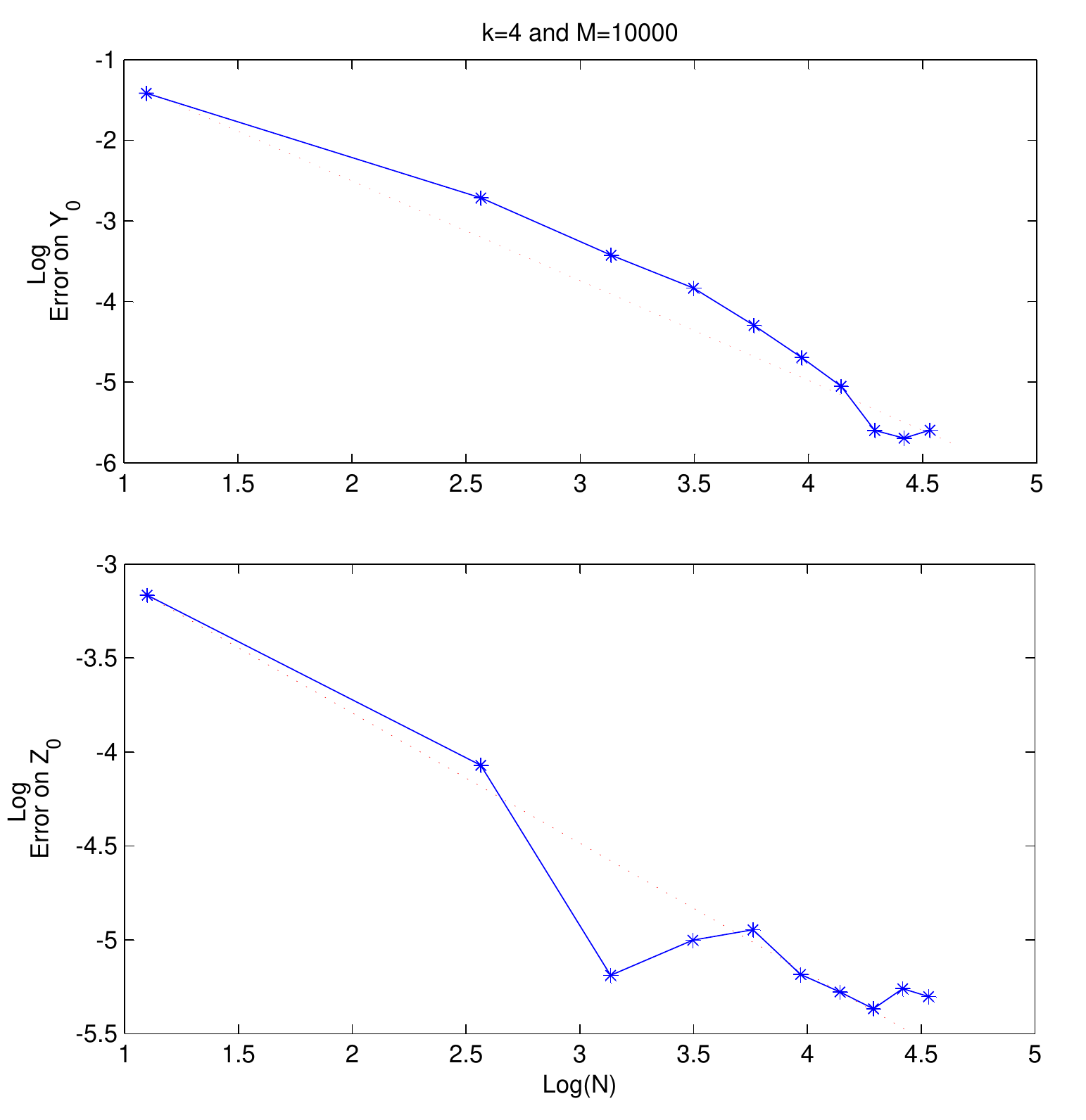}
      \caption{Log-Error  curve to estimate $(Y_0,Z_0)$, European Call case.}
	  \end{center}\label{fig_erreurCall}
	 \end{figure}
\begin{figure}[H]%
		 \begin{center}
 			\includegraphics[scale=0.50]{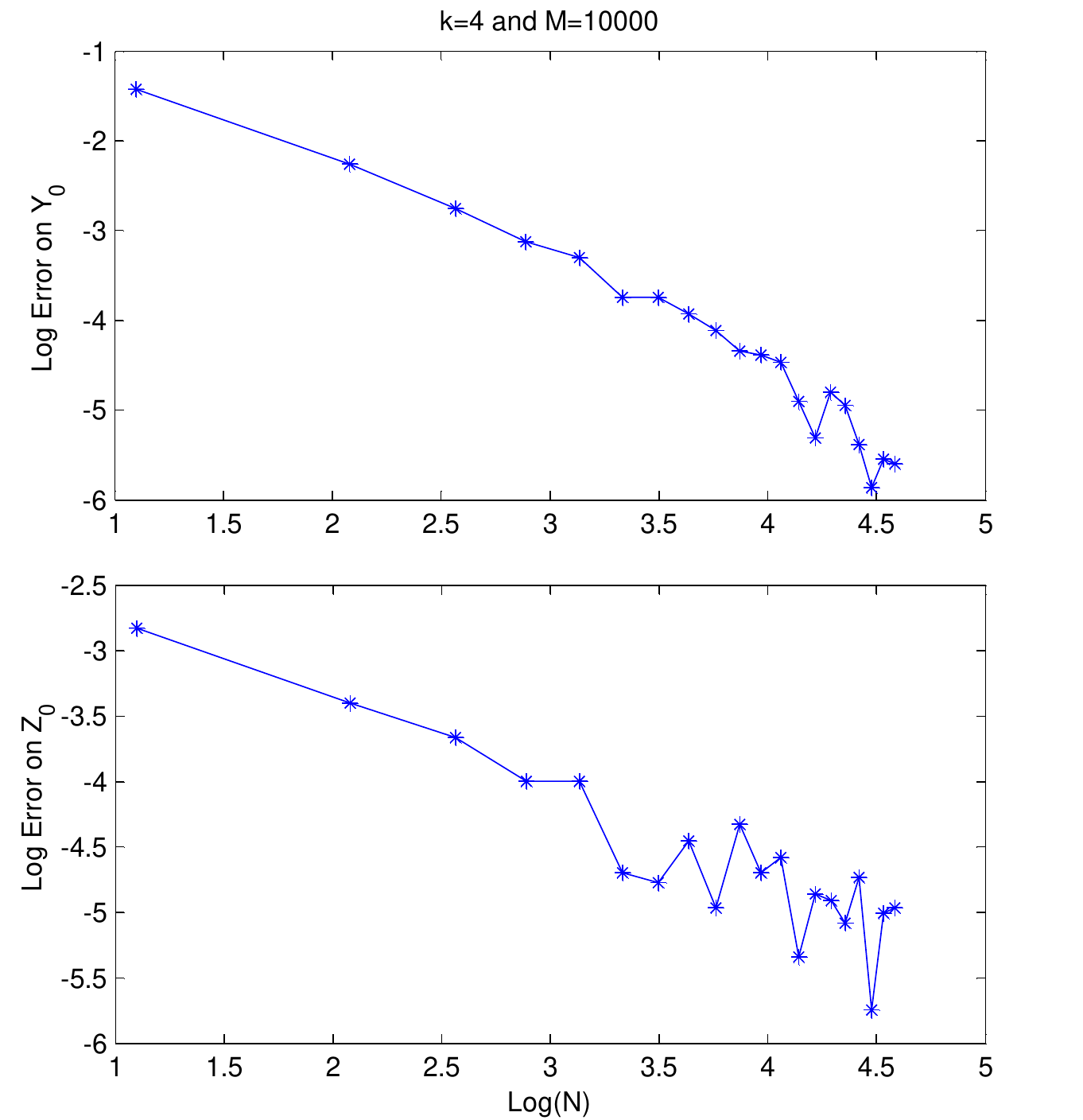}
            \caption{Log-Error  curve to estimate $(Y_0,Z_0)$, European Put.} 
						\end{center}
 \end{figure}
\noindent  On the  following graphic, we compare  the convergence   result  of the regression-later algorithm (in blue) with the standard  implicit Backward Euler-Maruyama scheme $(S.I)$  of  Section \ref{eulerMaruyama} in the particular  case of a Call option valuation.    The implicit Euler scheme  $(S.I)$ uses the  classical regression-now (cf. \cite{glasserman2004simulation})  technique to evaluate the couple $(Y_0,Z_0)$. It   is the customary approach represented in red. 
 
 \begin{figure}[H]
		 \begin{center}
  \includegraphics[scale=0.38]{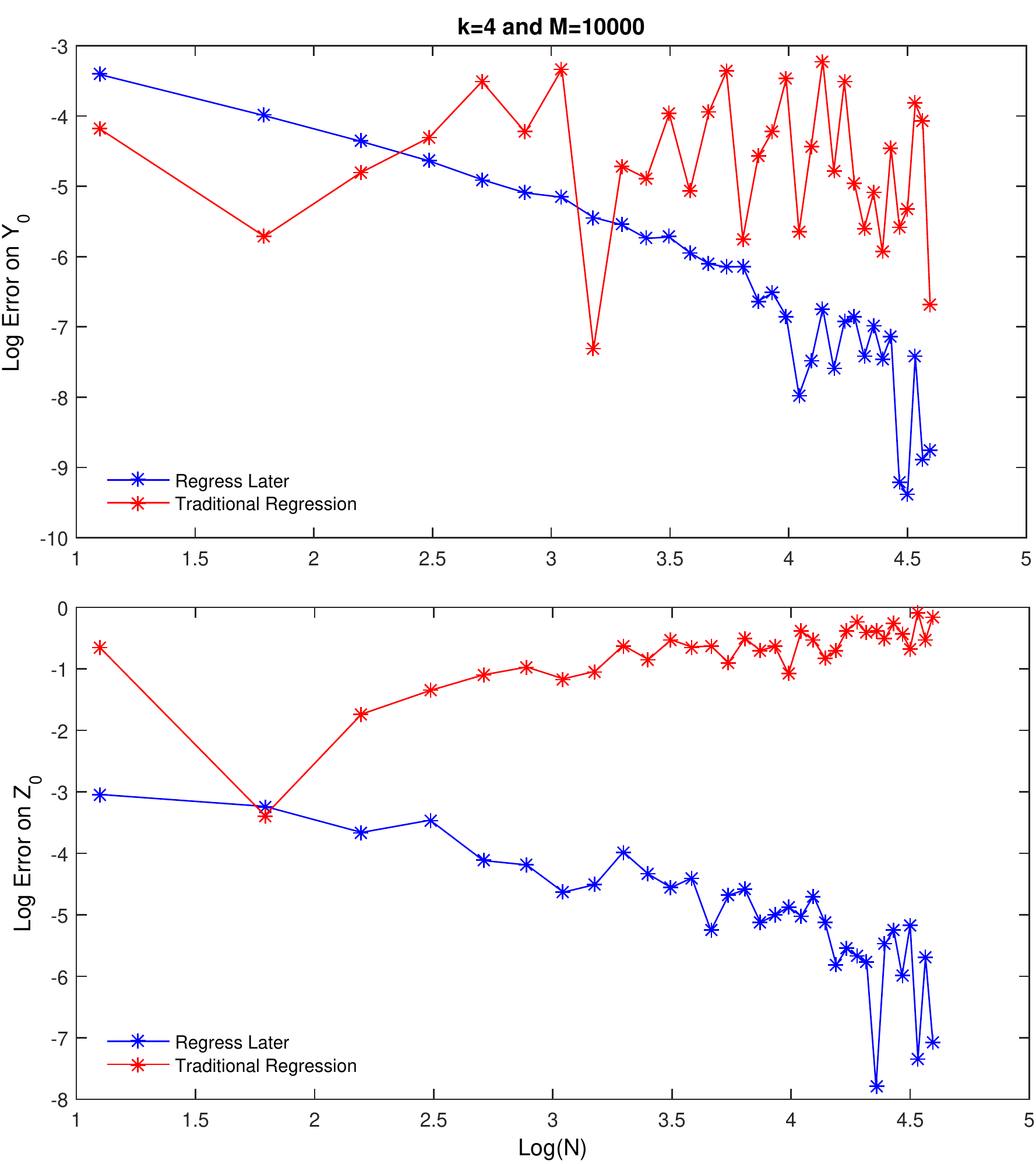}
        \caption{Comparison Log-Error  curve   (European Call).} 
	   	\end{center}
 \end{figure}
 \noindent The graphics of the above figure  shows   that the error curves are volatile when  we increase the number of time instances. The volatility effect seems to  be persistent   regarding the approximation of   $Z_0$ with the  scheme $(S.I)$.   
 In other words, the results  seem to be    more volatile with    the standard  implicit Backward Euler-Maruyama scheme $(S.I)$ in this particular case choice of  $M$ and $k$.   
   As an alternative approach, the regression-later approach  shows a stable  converge trend  and less volatile that the result of the scheme  scheme $(S.I)$.    The same remarks are  applied to the European Put case.  This  graphical  results  shows    that the regression-later approach: as an alternative approach, could offer several advantages  in comparison to  the  regression-now technique.  
 
 \subsection*{Application 2: Brownian Functional Case}
In this   example, the underlying process is assumed to be a standard Brownian motion  $W$ on the time interval  $[0 ,T]$. In other  words,  the forward  process is simply a  Brownian motion and the terminal condition is a functional of the Brownian motion  $W$.  We consider the BSDE 
  \begin{equation}
  \left\{ 
  \label{EDSR00_Ex2}
     \begin{aligned}
   	    -&dY_t=  f(t,W_t,Y_t,Z_t)dt - Z_tdW_t, \qquad 0\leq t< 1, \\
   	    &Y_1=\phi(W_1), 
   	    \end{aligned} 
  \right.
 \end{equation}
  where  the terminal function and the driver function are defined by    
 \begin{equation*}
     \left\{ 
     \begin{aligned}
     &\phi(x)= x\arctan(x)- \ln(\sqrt{ 1+x^2})\\
     & f(t,W_t	, Y_t, Z_t)= -  \frac{1}{2 (1+\tan^2(Z_t))}  . 
     \end{aligned} 
     \right.
 \end{equation*}
It is easy to check by   It\^o's formula  that, the solution of the above system is almost surely   
 %
   {$$(Y_t ,Z_t ) = (-  \frac{1}{2} \ln(1+W_t^2) +  W_t\arctan(W_t) , \arctan(W_t) ).$$ }
 By noting that   the function $x  \mapsto  \ln(x)$ satisfies    the linear growth  condition and the function $x  \mapsto  \arctan(x)$ is bounded,  the unique solution of \eqref{EDSR00_Ex2} satisfies    
 $$(Y_t ,Z_t)_{0\leq t\leq T}\in \mathcal{S}^2(\R ) \times \mathcal{H}^2(\R ) .$$
The exact value of the couple  $(Y,Z ) $  at the time point  $t_0$ is    $( {Y}  _0, {Z}  _0)=(0,0)$.
  The figure   below  shows  the empirical logarithm of the absolute  error  induced by  the numerical  estimation of the couple $( {Y}  _0, {Z}  _0)$.  
%
 %
 \begin{figure}[h!]
						\begin{center}
 				  \includegraphics[scale=0.50]{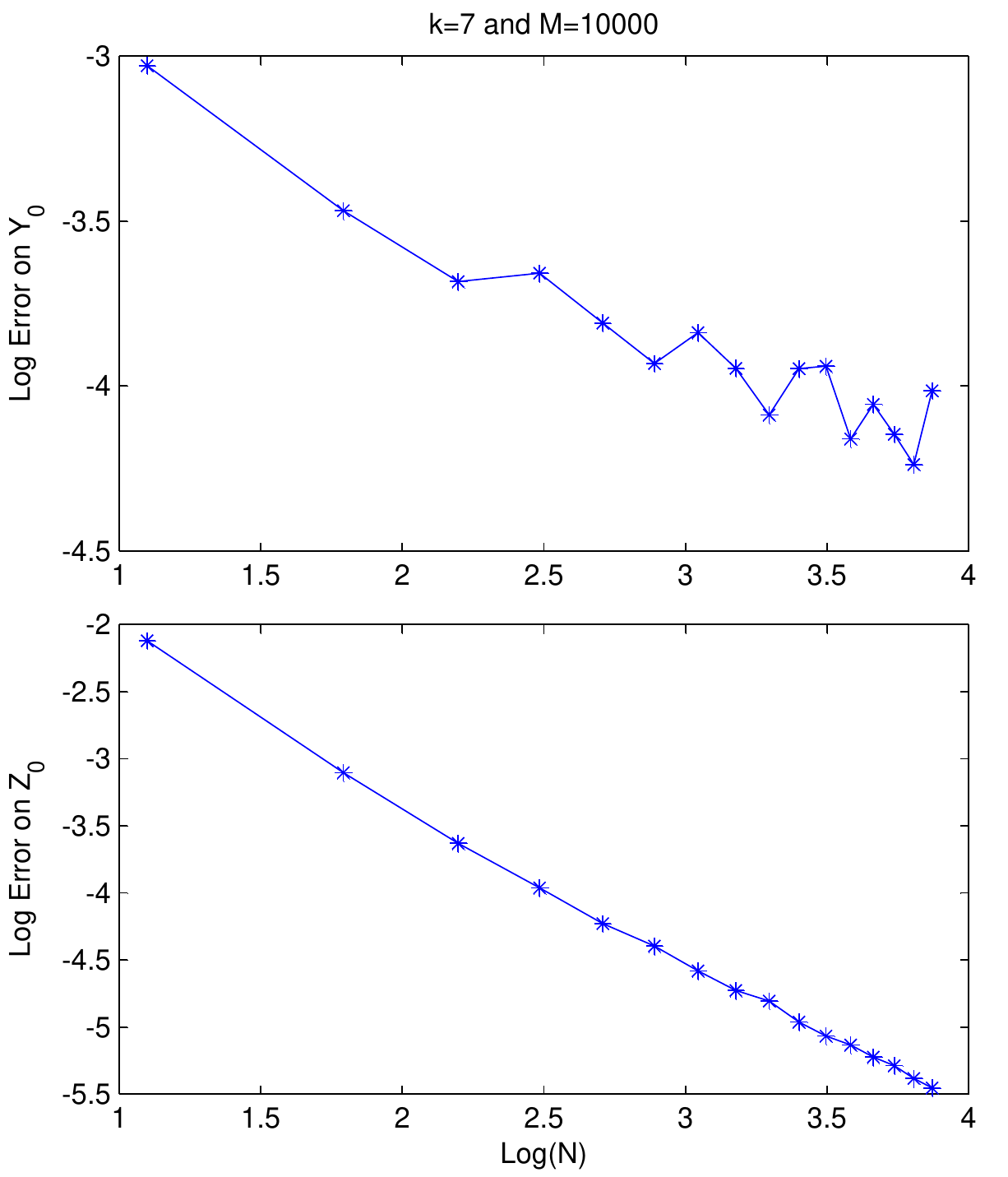}
                 \caption{Log-Error  curve to estimate  $(Y_0,Z_0)$}
						\end{center}
						\label{erreurTangente}
 \end{figure}
\noindent   Modulo the choice of $|\pi|$ and the   number of the basis function $k$,  the graphics show  a stable convergence result. This leads us to the same conclusion as above regarding the estimation  of the couple $ (Y_0 ,Z_0)$.       
Nevertheless the estimation of the initial value  $ Y_0$  is more stable and quicker  than the estimation of the initial value  $ Z_0$ in the first example. 
The convergence order could be also  accelerated  by   two-step schemes or   the Runge-Kutta methods (see e.g. \cite{chassagneux2014runge}, \cite{butcher2005runge}, \cite{ascher1997implicit}).
%
%
  \newpage
\section{Conclusion}
\setcounter{equation}{0} \setcounter{Assumption}{0}
\setcounter{Theorem}{0} \setcounter{Proposition}{0}
\setcounter{Corollary}{0} \setcounter{Lemma}{0}
\setcounter{Definition}{0} \setcounter{Remark}{0}
We have discussed a new numerical scheme for backward stochastic differential equations (BSDEs). 
 The  scheme is based on the regression-later approach. 
 In the first part of our work, we introduced the theory of BSDEs, gave  some general background on their studies and reviewed the classical backward Euler-Maruyama scheme.  In the next step, we   described our    regression-later algorithm  in detail  and  derived a  convergence result of the scheme. 
Finally, we   provided two numerical experiments to illustrate the performance of the regression-later  algorithm: the first in the context of option pricing  and  the second in  the case  where the terminal condition is a functional of a Brownian motion. 
Modulo a  suitable choice of the number of discretization points and the number of  basis functions,  our numerical results  show a stable convergence  regarding the estimation of   the  solution  $(Y,Z)$.
 In many numerical algorithms for solving BSDEs,  one of the
difficulties is to solve  a dynamic programming problem which involves often the computation of conditional expectations at each step across the time interval.
We remark that in  many alternative algorithms, the  numerical  computation of $Z$ is more challenging than the computation of the process $Y$, leading to potential numerical instabilities especially in higher dimensions. It is interesting to note that  our algorithm circumvents this difficulty,
 by obtaining the numerical approximation of the $Z$ process directly from the approximation of $Y$  and the basis functions. Our numerical results look highly promising, but more future researches are needed particularly regarding the global analysis of the error on the estimation of   $(Y, Z)$.  
 \newpage

\section{Appendix}
\setcounter{equation}{0} \setcounter{Assumption}{0}
\setcounter{Theorem}{0} \setcounter{Proposition}{0}
\setcounter{Corollary}{0} \setcounter{Lemma}{0}
\setcounter{Definition}{0} \setcounter{Remark}{0}
\begin{Lemma}\label{art1_Lem_YoungInequality}
 For any  constant $\alpha >0$  and for any $a,b\in \R$, 
  \begin{eqnarray}\label{ineq_YoungInequality}
 (a+b)^2 \leq  (1+\alpha) a^2 + (1+ \frac{1}{\alpha}) b^2.  
 \end{eqnarray} 
 \end{Lemma}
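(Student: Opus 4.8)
The plan is to reduce everything to the elementary cross-term estimate. First I would expand the left-hand side as $(a+b)^2 = a^2 + 2ab + b^2$, so that the claim is equivalent to the bound $2ab \le \alpha\,a^2 + \frac{1}{\alpha}\,b^2$.

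Next, to obtain this cross-term bound I would start from the obvious nonnegativity of a square: for any $\alpha>0$,
\[
\left(\sqrt{\alpha}\,a - \tfrac{1}{\sqrt{\alpha}}\,b\right)^2 \ge 0,
\]
which expands to $\alpha\,a^2 - 2ab + \frac{1}{\alpha}\,b^2 \ge 0$, i.e. $2ab \le \alpha\,a^2 + \frac{1}{\alpha}\,b^2$. (One may equally phrase this as the weighted AM--GM inequality applied to $a^2$ and $b^2$.)

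Finally I would substitute this into the expansion: $(a+b)^2 = a^2 + 2ab + b^2 \le a^2 + \alpha\,a^2 + \frac{1}{\alpha}\,b^2 + b^2 = (1+\alpha)\,a^2 + (1+\frac{1}{\alpha})\,b^2$, which is exactly \eqref{ineq_YoungInequality}. There is no real obstacle here — the only thing to be mindful of is that the argument works for all real $a,b$ (no absolute values are needed, since the square $(\sqrt{\alpha}\,a - \frac{1}{\sqrt{\alpha}}\,b)^2$ is already sign-free), and that $\alpha>0$ is exactly what is needed for $\sqrt{\alpha}$ and $1/\alpha$ to make sense.
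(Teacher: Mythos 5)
Your proof is correct and is essentially the same as the paper's: the paper simply cites Young's inequality for the cross term $2ab \le \alpha a^2 + \tfrac{1}{\alpha}b^2$, and you have just written out the standard one-line derivation of that inequality from $\bigl(\sqrt{\alpha}\,a - \tfrac{1}{\sqrt{\alpha}}\,b\bigr)^2 \ge 0$ before combining it with the expansion of $(a+b)^2$.
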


 \begin{proof}
 The result is a direct consequence of Young's inequality. 
 \end{proof}
\noindent Let us now recall the classical discrete Gronwall Lemma    (see, e.g. \cite{zhang2004NumSchemeBSDE} or \cite{memin2008convergence}) . 
\begin{Lemma}[\textbf{Gronwall Inequality A}]\label{art1_Gronwall1}
Let us consider  the partition \\
$$\pi: 0= t_0 < ... < t_N= T$$ of the interval $[0,T]$ and let  $\Delta_i$  
 be its mesh. 
We also consider the families  
$(a_k)_{0\leq k\leq N},(b_k)_{0\leq k\leq N},$ assumed  to be non-negative such that 
 for some positive constant   $\gamma>0$  we have: 
 \begin{eqnarray*}
 a_{k-1} \leq (1+\gamma \Delta_i)a_k+b_{k},\quad k=1,\dots,N   .
 \end{eqnarray*} 
Then,  
$$  \quad   \max_{0\leq i \leq N } a_i  \leq   e^{\gamma T} ( a_N + \sum_{i=1}^{N}  b_i ).$$
 \end{Lemma}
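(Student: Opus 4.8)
The plan is to unroll the one-step recursion into a closed form and then bound the resulting products by exponentials. First I would fix an index $j\in\{0,\dots,N\}$ and interpret the hypothesis as $a_{k-1}\le (1+\gamma\Delta_{k-1})a_k + b_k$, where $\Delta_{k-1}=t_k-t_{k-1}$. Applying this relation repeatedly, stepping from index $j$ up to $N$, a straightforward backward induction on the number of steps gives
$$a_j \le \Big(\prod_{k=j+1}^{N}(1+\gamma\Delta_{k-1})\Big) a_N + \sum_{k=j+1}^{N}\Big(\prod_{l=j+1}^{k-1}(1+\gamma\Delta_{l-1})\Big) b_k,$$
with the convention that an empty product equals $1$ (so the case $j=N$ is the trivial $a_N\le a_N$). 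Since $\gamma>0$ and the mesh sizes are non-negative, every factor $1+\gamma\Delta_{k-1}$ is at least $1$, hence each partial product appearing above is bounded by the full product $\prod_{k=1}^{N}(1+\gamma\Delta_{k-1})$.

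Next I would bound that full product using the elementary inequality $1+x\le e^{x}$, valid for all $x\in\R$:
$$\prod_{k=1}^{N}(1+\gamma\Delta_{k-1}) \le \exp\Big(\gamma\sum_{k=1}^{N}\Delta_{k-1}\Big) = \exp(\gamma T),$$
because $\sum_{k=1}^{N}\Delta_{k-1}=\sum_{k=1}^{N}(t_k-t_{k-1}) = t_N-t_0 = T$ telescopes. Plugging this into the unrolled inequality, and using $b_k\ge 0$ to enlarge the summation range from $\{j+1,\dots,N\}$ to $\{1,\dots,N\}$, yields
$$a_j \le e^{\gamma T}a_N + e^{\gamma T}\sum_{k=1}^{N}b_k = e^{\gamma T}\Big(a_N + \sum_{k=1}^{N}b_k\Big).$$

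Finally, since the right-hand side is independent of $j$ and the bound holds for every $j\in\{0,\dots,N\}$, taking the maximum over $j$ gives $\max_{0\le i\le N}a_i \le e^{\gamma T}\big(a_N+\sum_{i=1}^{N}b_i\big)$, which is the assertion. There is no genuine obstacle in this argument; the only points requiring a little care are the index bookkeeping in the unrolling step and the telescoping identity $\sum_{k}\Delta_{k-1}=T$, everything else reducing to $1+x\le e^x$ together with non-negativity of $(a_k)$ and $(b_k)$.
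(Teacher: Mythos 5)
Your proof is correct: the unrolling into a product form, the bound $1+x\le e^{x}$ together with the telescoping $\sum_{k}\Delta_{k-1}=T$, and the use of non-negativity of $(a_k)$ and $(b_k)$ are exactly the standard argument for the discrete Gronwall inequality. The paper itself gives no proof of this lemma (it is stated as classical and referred to \cite{zhang2004NumSchemeBSDE} and \cite{memin2008convergence}), and your write-up supplies precisely that standard argument, including the sensible reading of the paper's slightly sloppy ``$\Delta_i$'' in the hypothesis as the step-dependent mesh $\Delta_{k-1}$.
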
 
\begin{Lemma}[\textbf{Gronwall Inequality B}]\label{art1_Gronwall2}
 Let $y,b, a : [0,T] \mapsto \R $ be three continuous functions such that,   $b$ is non-negative and
 $$   y(t) \leq a(t) +  \int_0^t b(s)y(s)ds, \qquad 0 \leq t \leq T.$$ 
  Then,  
 $$  y(t) \leq a(t) + \int_0^t a(s) b(s) \exp  \bigg(  \int_s^t b(u)du \bigg)  ds, \qquad 0 \leq t \leq T.     $$ 
In addition, if the function $a$ is non-decreasing, then
 $$  y(t) \leq a(t)  \exp  \bigg(  \int_0^t b(s)ds \bigg), \qquad 0 \leq t \leq T.     $$ 
\end{Lemma}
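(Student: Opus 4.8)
The plan is to convert the integral inequality into a first-order linear differential inequality for the running integral and then remove it with an integrating factor, i.e.\ the classical Gr\"onwall argument. First I would introduce $R(t):=\int_0^t b(s)\,y(s)\,ds$; since $s\mapsto b(s)\,y(s)$ is continuous, $R$ is of class $C^1$ with $R(0)=0$ and $R'(t)=b(t)\,y(t)$. Substituting the hypothesis $y(t)\le a(t)+R(t)$ and using $b\ge 0$ gives the linear differential inequality $R'(t)\le b(t)\,a(t)+b(t)\,R(t)$ for every $t\in[0,T]$.

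Next I would multiply through by the positive $C^1$ integrating factor $m(t):=\exp\!\big(-\int_0^t b(u)\,du\big)$, which turns the inequality into $\frac{d}{dt}\big(m(t)\,R(t)\big)=m(t)\big(R'(t)-b(t)R(t)\big)\le m(t)\,b(t)\,a(t)$. Integrating from $0$ to $t$ and using $R(0)=0$ yields $m(t)\,R(t)\le\int_0^t m(s)\,b(s)\,a(s)\,ds$; dividing by $m(t)$ and noting that $m(s)/m(t)=\exp\!\big(\int_s^t b(u)\,du\big)$ gives $R(t)\le\int_0^t a(s)\,b(s)\,\exp\!\big(\int_s^t b(u)\,du\big)\,ds$. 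Adding $a(t)$ to both sides and invoking $y(t)\le a(t)+R(t)$ produces the first displayed bound.

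For the second statement, suppose $a$ is non-decreasing, so that $a(s)\le a(t)$ whenever $0\le s\le t$. Then the bound just obtained gives $y(t)\le a(t)\big(1+\int_0^t b(s)\exp(\int_s^t b(u)\,du)\,ds\big)$. I would finish by observing that, writing $F(s):=\int_s^t b(u)\,du$, one has $\frac{d}{ds}e^{F(s)}=-b(s)\,e^{F(s)}$, hence $\int_0^t b(s)\,e^{F(s)}\,ds=e^{F(0)}-e^{F(t)}=\exp\!\big(\int_0^t b(u)\,du\big)-1$, so the parenthesis collapses to $\exp\!\big(\int_0^t b(u)\,du\big)$, which is exactly the claimed estimate.

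The argument is routine; the only point requiring a little attention is that $a$ is merely continuous and not differentiable, but this is harmless since $a$ is never differentiated, only integrated, and the function $t\mapsto m(t)\,R(t)$ to which the fundamental theorem of calculus is applied is genuinely $C^1$. Consequently I do not anticipate any real obstacle.
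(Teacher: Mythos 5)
Your argument is correct: the reduction to the differential inequality $R'(t)\le b(t)a(t)+b(t)R(t)$ for $R(t)=\int_0^t b(s)y(s)\,ds$, the integrating factor $\exp\bigl(-\int_0^t b\bigr)$, and the exact evaluation $\int_0^t b(s)\exp\bigl(\int_s^t b(u)\,du\bigr)ds=\exp\bigl(\int_0^t b\bigr)-1$ in the monotone case are all sound, and your remark that $a$ is only integrated (never differentiated) correctly disposes of the regularity issue. The paper itself states this lemma in the appendix as a classical fact without any proof, so there is nothing to compare against; your write-up is the standard textbook argument and would serve as a complete proof.
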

 \newpage
\bibliography{arXiv_Regress_Later} 

\begin{thebibliography}{10}

\bibitem{alanko2013reducing}
{\sc S.~Alanko and M.~Avellaneda}, {\em Reducing variance in the numerical
  solution of bsdes}, Comptes Rendus Mathematique, 351 (2013), pp.~135--138.

\bibitem{ascher1997implicit}
{\sc U.~M. Ascher, S.~J. Ruuth, and R.~J. Spiteri}, {\em Implicit-explicit
  runge-kutta methods for time-dependent partial differential equations},
  Applied Numerical Mathematics, 25 (1997), pp.~151--167.

\bibitem{bally1997approximation}
{\sc V.~Bally}, {\em Approximation scheme for solutions of bsde}, Pitman
  research notes in mathematics series,  (1997), pp.~177--192.

\bibitem{barles1997sde}
{\sc G.~Barles and E.~Lesigne}, {\em Sde, bsde and pde}, Pitman Research Notes
  in Mathematics Series,  (1997), pp.~47--82.

\bibitem{bender2013posteriori}
{\sc C.~Bender and J.~Steiner}, {\em A posteriori estimates for backward sdes},
  SIAM/ASA Journal on Uncertainty Quantification, 1 (2013), pp.~139--163.

\bibitem{beutner2013fast}
{\sc E.~Beutner, A.~Pelsser, and J.~Schweizer}, {\em Fast convergence of
  regress-later estimates in least squares monte carlo}, Available at SSRN
  2328709,  (2013).

\bibitem{bismut1973}
{\sc J.~Bismut}, {\em Conjugate convex functions in optimal stochastic
  control}, Journal of Mathematical Analysis and Applications, 44 (1973),
  pp.~384--404.

\bibitem{bouchard:touzi2004MC-BSDE}
{\sc B.~Bouchard and N.~Touzi}, {\em Discrete-time approximation and
  monte-carlo simulation of backward stochastic differential equations},
  Stochastic Processes and their Applications, 111 (2004), pp.~175--206.

\bibitem{briand2014simulation}
{\sc P.~Briand and C.~Labart}, {\em Simulation of bsdes by wiener chaos
  expansion}, The Annals of Applied Probability, 24 (2014), pp.~1129--1171.

\bibitem{butcher2005runge}
{\sc J.~Butcher}, {\em Runge-kutta methods for ordinary differential
  equations}, in COE Workshop on Numerical Analysis Kyushu University, 2005.

\bibitem{chassagneux2014runge}
{\sc J.-F. Chassagneux and D.~Crisan}, {\em Runge--kutta schemes for backward
  stochastic differential equations}, The Annals of Applied Probability, 24
  (2014), pp.~679--720.

\bibitem{chassagneux2016numerical}
{\sc J.-F. Chassagneux and A.~Richou}, {\em Numerical simulation of quadratic
  bsdes}, The Annals of Applied Probability, 26 (2016), pp.~262--304.

\bibitem{cheridito2007second}
{\sc P.~Cheridito, H.~M. Soner, N.~Touzi, and N.~Victoir}, {\em Second-order
  backward stochastic differential equations and fully nonlinear parabolic
  pdes}, Communications on Pure and Applied Mathematics, 60 (2007),
  pp.~1081--1110.

\bibitem{chevance1997resolution}
{\sc D.~Chevance}, {\em R{\'e}solution num{\'e}rique des {\'e}quations
  diff{\'e}rentielles stochastiques r{\'e}trogrades}, PhD thesis, Universtit\'e
  de Provence, 1997.

\bibitem{crisan2012solving}
{\sc D.~Crisan and K.~Manolarakis}, {\em Solving backward stochastic
  differential equations using the cubature method: application to nonlinear
  pricing}, SIAM Journal on Financial Mathematics, 3 (2012), pp.~534--571.

\bibitem{delong2013backward}
{\sc L.~Delong}, {\em Backward stochastic differential equations with jumps and
  their actuarial and financial applications}, Springer, 2013.

\bibitem{duffie1992stochastic}
{\sc D.~Duffie and L.~G. Epstein}, {\em Stochastic differential utility},
  Econometrica: Journal of the Econometric Society,  (1992), pp.~353--394.

\bibitem{el2008backward}
{\sc N.~El~Karoui, S.~Hamad{\`e}ne, and A.~Matoussi}, {\em Backward stochastic
  differential equations and applications}, Indifference pricing: theory and
  applications,  (2008), pp.~267--320.

\bibitem{ElKaroui1997BSDEinFinance}
{\sc N.~El~Karoui, S.~Peng, and M.~C. Quenez}, {\em Backward stochastic
  differential equations in finance}, Mathematical finance, 7 (1997),
  pp.~1--71.

\bibitem{glasserman2004simulation}
{\sc P.~Glasserman and B.~Yu}, {\em Simulation for american options: regression
  now or regression later?}, in Monte Carlo and Quasi-Monte Carlo Methods 2002,
  Springer, 2004, pp.~213--226.

\bibitem{gobet2007error}
{\sc E.~Gobet and C.~Labart}, {\em Error expansion for the discretization of
  backward stochastic differential equations}, Stochastic processes and their
  applications, 117 (2007), pp.~803--829.

\bibitem{gobet2005reg-MC-BSDE}
{\sc E.~Gobet, J.~Lemor, and X.~Warin}, {\em A regression-based {M}onte {C}arlo
  method to solve backward stochastic differential equations}, Annals of
  Applied Probability, 15 (2005), pp.~2172--2202.

\bibitem{gobet2016approximation}
{\sc E.~Gobet and P.~Turkedjiev}, {\em Approximation of backward stochastic
  differential equations using malliavin weights and least-squares regression},
  Bernoulli, 22 (2016), pp.~530--562.

\bibitem{gong2015one}
{\sc B.~Gong and H.~Rui}, {\em One order numerical scheme for forward--backward
  stochastic differential equations}, Applied Mathematics and Computation, 271
  (2015), pp.~220--231.

\bibitem{hamadene2007starting}
{\sc S.~Hamad{\`e}ne and M.~Jeanblanc}, {\em On the starting and stopping
  problem: application in reversible investments}, Mathematics of Operations
  Research, 32 (2007), pp.~182--192.

\bibitem{hu2005utility}
{\sc Y.~Hu, P.~Imkeller, and M.~M{\"u}ller}, {\em Utility maximization in
  incomplete markets}, The Annals of Applied Probability, 15 (2005),
  pp.~1691--1712.

\bibitem{huijskens2016efficient}
{\sc T.~P. Huijskens, M.~Ruijter, and C.~W. Oosterlee}, {\em Efficient
  numerical fourier methods for coupled forward--backward sdes}, Journal of
  Computational and Applied Mathematics, 296 (2016), pp.~593--612.

\bibitem{hyndman2014global}
{\sc C.~B. Hyndman and P.~O. Ngou}, {\em Global convergence and stability of a
  convolution method for numerical solution of bsdes}, arXiv preprint
  arXiv:1410.8595,  (2014).

\bibitem{Ikeda2014stochastic}
{\sc N.~Ikeda and S.~Watanabe}, {\em Stochastic differential equations and
  diffusion processes}, Elsevier, 2014.

\bibitem{khedher2016discretisation}
{\sc A.~Khedher and M.~Vanmaele}, {\em Discretisation of fbsdes driven by
  c{\`a}dl{\`a}g martingales}, Journal of Mathematical Analysis and
  Applications, 435 (2016), pp.~508--531.

\bibitem{laeven2014robust}
{\sc R.~J. Laeven and M.~Stadje}, {\em Robust portfolio choice and indifference
  valuation}, Mathematics of Operations Research, 39 (2014), pp.~1109--1141.

\bibitem{jinma1994solving}
{\sc J.~Ma, P.~Protter, and J.~Yong}, {\em Solving forward-backward stochastic
  differential equations explicitly a four step scheme}, Probability Theory and
  Related Fields, 98 (1994), pp.~339--359.

\bibitem{memin2008convergence}
{\sc J.~M{\'e}min, S.~Peng, and M.~Xu}, {\em Convergence of solutions of
  discrete reflected backward sde's and simulations}, Acta Mathematicae
  Applicatae Sinica, English Series, 24 (2008), pp.~1--18.

\bibitem{nualart2006malliavin}
{\sc D.~Nualart}, {\em The Malliavin calculus and related topics}, vol.~1995,
  Springer, 2006.

\bibitem{pardoux1990adapted}
{\sc E.~Pardoux and S.~Peng}, {\em Adapted solution of a backward stochastic
  differential equation}, Systems \& Control Letters, 14 (1990), pp.~55--61.

\bibitem{Pardoux1992Peng}
\leavevmode\vrule height 2pt depth -1.6pt width 23pt, {\em Backward stochastic
  differential equations and quasilinear parabolic partial differential
  equations}, in Stochastic partial differential equations and their
  applications, Springer, 1992, pp.~200--217.

\bibitem{stentoft2004convergence}
{\sc L.~Stentoft}, {\em Convergence of the least squares monte carlo approach
  to american option valuation}, Management Science, 50 (2004), pp.~1193--1203.

\bibitem{ventura2016discretely}
{\sc W.~A. Ventura and A.~Korzeniowski}, {\em On discretely reflected backward
  stochastic differential equations}, Stochastic Analysis and Applications, 34
  (2016), pp.~1--23.

\bibitem{zhang2004NumSchemeBSDE}
{\sc J.~Zhang}, {\em A numerical scheme for bsdes}, Annals of Applied
  Probability, 14 (2004), pp.~459--488.

\bibitem{zhao2006new}
{\sc W.~Zhao, L.~Chen, and S.~Peng}, {\em A new kind of accurate numerical
  method for backward stochastic differential equations}, SIAM Journal on
  Scientific Computing, 28 (2006), pp.~1563--1581.

\end{thebibliography}
 \bibliographystyle{alpha}  %
\end{document}